\theoremstyle{plain}
\newtheorem{teo}{Theorem}
\newtheorem{teoi}{Theorem}
\newtheorem{cor}{Corollary}
\theoremstyle{definition}
\newtheorem*{con}{Conjecture}
\theoremstyle{remark}
\newtheorem{oss}{Remark}
\newtheorem{ese}{Example}
\begin{document}

\selectlanguage{english}

\title{On the complexity of non-orientable Seifert fibre spaces
\footnotetext{A.~Cattabriga and M.~Mulazzani have been supported by the "National Group for Algebraic and Geometric Structures, and their Applications" (GNSAGA-INdAM) and University of Bologna, funds for selected research topics. S.~ Matveev has been supported by  RFBR grant  N17-01-0690. T.~Nasybullov has been supported by the Department of Mathematics of the University of Bologna, grant rep. N. 185/2015, and by the Research Foundation -- Flanders (FWO), app. 12G0317N.}}

\author{A.~Cattabriga - S.~Matveev - M.~Mulazzani - T.~Nasybullov}

\maketitle

\begin{abstract}
In this paper we deal with Seifert fibre spaces, which are  compact 3-manifolds admitting a foliation by  circles. We give a  combinatorial description for these manifolds in all the possible cases: orientable, non-orientable, closed, with boundary.   Moreover, we compute a potentially sharp upper bound for their complexity  in terms of the invariants of the combinatorial description, extending to the non-orientable case results by Fominykh and Wiest for the orientable case with boundary and by Martelli and Petronio for the closed orientable case.   Our upper bound is indeed sharp for all  Seifert fibre spaces contained  in the census of non-orientable closed 3-manifolds  classified with respect to complexity.   
\end{abstract}

\begin{section}{Introduction and preliminaries}
\label{preliminari}

The family of Seifert fibre spaces  (see \cite{Sc}) is  a  generalization of   Seifert's original one  (\cite{Se}), since it contains also manifolds locally modeled over solid Klein bottles (which are always non-orientable). This family coincides with the class of compact 3-manifolds foliated by  circles and  have a central role in Thurston geometrization theory (see for example \cite{Sc}). Indeed, in the closed case, each  Seifert fibre space is geometric and each   geometric 3-manifold  is  either a Seifert fiber space or admits  hyperbolic or  $\textup{Sol}$ geometry.  In other words the class of Seifert fibre spaces coincides with the class of geometric manifold admitting six of the eight possible geometries, that is $\mathbb E^3, \mathbb S^3, \mathbb S^2\times \mathbb R, \mathbb H^2\times \mathbb R, \textup{Nil}, \widetilde{SL_2(\mathbb R)}$. Moreover, Seifert fibre spaces with non-empty boundary are one of  the building blocks of the relevant class of  Waldhausen graph manifold (see \cite{Wa}).

  While the theory of Seifert fibre spaces is well established in the orientable case, including the construction of special spines and the   estimation of complexity, in the non-orientable one this is not the case: the knowledge  about  construction and classification  of  non-orientable Seifert fibre space, their special spines and complexity is very modest.  This  paper is devoted to  the closure of  this gap, dealing with  both closed and bordered case.

The notion of complexity for compact 3-dimensional manifolds  has been introduced by the second author in  \cite{M1} (see also \cite{M2}) as a way  to   measure how ``complicated''   a manifold is.  Indeed for closed irreducible and $\mathbb P^2$-irreducible manifolds, the  complexity coincides with the minimum number of tetrahedra needed to construct a manifold, with the only exceptions  of $S^3$, $\mathbb{RP}^3$ and $L(3,1)$, all having complexity zero.  
Moreover, complexity  is additive under connected sum, it does not increase when cutting along incompressible surfaces, and it is finite-to-one in the closed irreducible case.
The last property has been used in order to construct  a census of manifolds according to complexity:  exact values of it  are listed for the orientable  case at  \texttt{http://matlas.math.csu.ru/?page=search}  (up to complexity 12) and   for the non-orientable  case  at \texttt{https://regina-normal.github.io} (up to complexity 11). The main goal of the paper is to furnish a potentially sharp upper bound for the complexity of  Seifert fibre spaces, extending to the non-orientable case results of  \cite{FW} for the orientable case with boundary and of \cite{MP2} for the closed orientable case. It is worth noting that, in the non-orientable closed case, our upper-bound coincides with the exact value of the complexity for all tabulated manifolds (which are about 350).

The organization  of the paper is the following.  In Section \ref{sseifert} we recall the definition of Seifert fibre spaces and give a combinatorial description of them  by a set of parameters which completely classify the spaces, up to fibre-preserving homeomorphism,  proving the following result (see  Theorem \ref{combi} for more details). 
\begin{teoi} \label{combii} Every Seifert fibre space is uniquely  determined, up to fibre-preserving homeomorphism,  by the normalized set of parameters  
$$\left\{b;\left(\epsilon,g,(t,k)\right);\left(h_1,\ldots,h_{m_+}\mid k_1,\ldots, k_{m_{-}}\right);\left((p_1,q_1).\ldots,(p_r,q_r)\right)\right\}.$$

\end{teoi}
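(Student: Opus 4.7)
The plan is to establish the two directions independently: (i) existence --- given any admissible normalized parameter set, construct a Seifert fibre space realizing it; and (ii) uniqueness --- given any Seifert fibre space $M$, extract such a normalized set from $M$ and show that two spaces with the same parameters are fibre-preserving homeomorphic. I would treat existence constructively and uniqueness by reversing the construction and analysing the residual ambiguities absorbed by the normalization.

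For existence, I would start from the base $2$-orbifold $B$ whose underlying surface has orientability type $\epsilon$ and genus $g$, with $m_+ + m_-$ boundary circles and $r$ cone points of orders $p_i$, and with the additional marked data $(t,k)$ encoding the local charts of solid Klein bottle type (the new ingredient of the non-orientable theory). Remove open saturated neighbourhoods of all cone points, of all Klein bottle chart centres, and of all boundary collars; over the remaining surface-with-boundary, take the (essentially unique) $S^1$-bundle whose orientability data matches $\epsilon$. Finally, glue back solid tori over the cone points and the $h_i$-labelled boundary components, and solid Klein bottles over the $(t,k)$-data and the $k_j$-labelled boundary components, using the gluing slopes prescribed by the pairs $(p_i,q_i)$, the integers $h_i,k_j$, and the global twist $b$. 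Verifying that the result is Seifert and that its extracted parameters coincide with the input is then a bookkeeping step.

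For uniqueness, reverse the construction: locate in $M$ the finitely many exceptional fibres and the solid Klein bottle model points, remove saturated tubular neighbourhoods of all of them, and obtain an honest $S^1$-bundle $M_0 \to B_0$ over a surface with boundary. Trivialize this bundle over a spanning tree in a cellulation of $B_0$; the slopes on the cells left over encode $b$, while the slopes along the removed cone disks give the pairs $(p_i,q_i)$, the slopes along the boundary tori and Klein bottles give the $h_i$ and $k_j$, and the number and type of solid Klein bottle charts give $(t,k)$. One then checks that the standard moves of Seifert calculus --- sign reversals of $q_i$, integer shifts of $(p_i,q_i)$ absorbed by $b$ and $h_i$, and modular reductions --- are exactly the ambiguity in this extraction, so that the normalization produces a well-defined complete invariant.

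The hard part will be the interplay between the non-orientable features and the normalization rules. Three issues require special care: (a) the solid Klein bottle charts are absent from the classical theory, so one must show that the pair $(t,k)$ is a genuine fibre-preserving invariant and interacts correctly with $b$ and with the $(p_i,q_i)$; (b) over a non-orientable base the reversal of a fibre is realized by an ambient isotopy along an orientation-reversing loop, so the sign ambiguities propagate in a different way than in the orientable case and must be folded into the normalization of the $(p_i,q_i)$; (c) when $\epsilon$ is non-orientable the Euler number $b$ is only well defined modulo $2$, which forces the corresponding reduction of the remaining slopes. The proof reduces to verifying that the group generated by these three operations is precisely what the normalization kills, so that the remaining parameters form a complete and unique fibre-preserving invariant, matching the orientable classifications of Fominykh--Wiest and Martelli--Petronio in their respective specializations.
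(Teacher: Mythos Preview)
Your plan is a \emph{from-scratch} classification argument: build the space from the orbifold data, then reverse the construction and check that the normalization kills exactly the group generated by the standard Seifert moves. That is a legitimate strategy, but it is not what the paper does, and it is considerably longer.

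The paper's proof is a two-line reduction to the already-known \emph{closed} case. Given two bordered Seifert fibre spaces $M,\bar M$ that are fibre-preserving homeomorphic, one first reads off $m_\pm$, $h_i$, $k_j$ directly from the fibred boundary (toric vs.\ Klein bottle components, number of exceptional fibres on each). Then one \emph{fills} every boundary component in a canonical fibre-respecting way --- solid Klein bottles into the Klein-bottle components carrying two exceptional fibres, $N\times S^1$ into the toric components, $N\,\widetilde\times\,S^1$ into the remaining Klein-bottle components --- so as to produce two \emph{closed} Seifert fibre spaces with the same $\epsilon,g,t,k,b$ and $(p_j,q_j)$ as before. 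One then invokes Fintushel's classification of closed Seifert fibre spaces (Theorem~2 of \cite{Fi}), together with the orientation normalization of Remark~\ref{rem}, and is done. The whole content is the observation that the boundary invariants separate first and the rest survives the capping.

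Your approach would reprove Fintushel's theorem along the way, which is fine but unnecessary here. Two smaller issues in your sketch: the references to Fominykh--Wiest and Martelli--Petronio are about \emph{complexity}, not about the fibre-preserving classification --- the relevant prior result is Fintushel's; and your reading of $(t,k)$ as ``local charts of solid Klein bottle type'' is off --- in this paper $(t,k)$ counts the \emph{closed} exceptional surfaces (reflector circles in the base), with $k$ of the $t$ being Klein bottles, whereas the solid-Klein-bottle local models are what produce the reflector \emph{arcs} counted by the $h_i,k_j$. Getting that distinction right is exactly what makes the capping argument go through cleanly.
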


Section \ref{scomplexity} is devoted to the computation of the complexity.  We first   deal with the case with boundary obtaining the following upper bound (see Theorem \ref{comp_bordo_reg} for details), valid   both in the orientable and in the non-orientable case.

\begin{teoi} \label{comp_bordo_regi} Let $M=\left\{b;\left(\epsilon,g,(t,k)\right);\left(h_1,\ldots,h_{m_+}\mid k_1,\ldots, k_{m_{-}}\right);
\left((p_1,q_1),\ldots,(p_r,q_r)\right)\right\}$  be a  Seifert fibre space    with non-empty boundary. Then 
$$
 c(M)\leq t +\sum_{j=1}^r \max\left\{S(p_j,q_j)-3,0\right\}.
$$
\end{teoi}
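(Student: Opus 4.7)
The plan is to prove the bound by exhibiting an explicit almost-simple spine $P$ of $M$ whose number of true vertices does not exceed $t+\sum_{j=1}^{r}\max\{S(p_j,q_j)-3,0\}$. Since $c(M)$ is by definition the minimum number of true vertices over all almost-simple spines of $M$, such a construction immediately yields the stated inequality.

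First, I would decompose $M$ along vertical tori and Klein bottles into a \emph{regular part} $M_0$, obtained as the preimage under the Seifert projection of the base orbifold with a small open disk removed around each exceptional point, together with $r$ fibred solid tori or solid Klein bottles $V_1,\ldots,V_r$ containing the exceptional fibres. I would then construct $P$ as the union of a spine $P_0$ of $M_0$ with the standard continued-fraction spines of the pieces $V_j$, arranged so that they match along the interface vertical surfaces without creating additional singularities. The spine $P_0$ itself is built as the preimage of a carefully chosen handle decomposition of the base, enriched with a controlled number of vertical annuli and Möbius bands needed to make it a spine rather than just a $2$-complex lifted from the base.

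The combinatorial datum $(\epsilon,g,(t,k))$ together with $(h_1,\ldots,h_{m_+}\mid k_1,\ldots, k_{m_-})$ dictates this handle decomposition: cross-caps, handles and reflector vs.\ non-reflector boundary components are incorporated in such a way that the $t$ true vertices of $P_0$ correspond exactly to the singularities of the base graph forced by these parameters, while the remaining topology is absorbed in non-singular $2$-cells. For each exceptional fibre of invariants $(p_j,q_j)$, the spine of the fibred solid torus/Klein bottle built from the negative continued fraction expansion of $p_j/q_j$, as in \cite{FW,MP2}, contributes exactly $\max\{S(p_j,q_j)-3,0\}$ further true vertices once its boundary pattern is matched to that of $P_0$; the constant $-3$ accounts for the three terminal steps of the construction that are glued to the interface without producing new vertices.

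The principal obstacle, not present in \cite{FW,MP2}, is the genuinely non-orientable setting. One must verify that (i) the preimage of a handle decomposition of a non-orientable or reflector-bounded base orbifold can be organized with the same efficiency as in the orientable case; (ii) the continued-fraction construction for fibred solid Klein bottles yields the same vertex count as for fibred solid tori; and (iii) the gluings across vertical Klein bottles match framings without forcing additional vertices. I expect the main technical effort to concentrate on point (ii), since the negative continued fraction algorithm must be reinterpreted for the non-standard fibrations appearing in solid Klein bottle neighbourhoods, and on the interface check in (iii), where the twisted framing may seem to force extra true vertices until one exhibits the explicit matching. Once these local compatibility checks are in place, summing the contributions over $P_0$ and over the $r$ exceptional pieces yields the bound stated in Theorem~\ref{comp_bordo_regi}.
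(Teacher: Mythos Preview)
Your overall strategy---decompose, build skeletons on pieces, assemble---matches the paper's, but there is a genuine gap in where the $t$ true vertices come from, and a misconception about which pieces carry which data.

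In the paper's construction the main block $M_0$ is obtained by excising from the base not only disks around the cone points but also open collars around the $t$ reflector \emph{circles}. Over this punctured base the skeleton is simply $P_0=f_0^{-1}(\Gamma_0)$ for a trivalent graph $\Gamma_0$, and this preimage has \emph{no} true vertices at all: the link of a point on a triple line of such a preimage is a circle with a diameter, not $K_4$. Your claim that ``the $t$ true vertices of $P_0$ correspond exactly to the singularities of the base graph'' cannot work for this reason---lifting a graph through a circle bundle never produces $K_4$ links. The $t$ vertices actually arise from $t$ separate \emph{exceptional blocks}, each a copy of $N\times S^1$ or $N\widetilde{\times}S^1$ filling back a collar of a reflector circle; the paper builds for each such block an ad~hoc skeleton (an annulus, a M\"obius strip, a twisted band, and a punctured torus or Klein bottle glued in a specific pattern) with exactly one true vertex. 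This construction is the heart of the non-orientable extension and is entirely absent from your outline.

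Your concern (ii) is also misplaced. Isolated exceptional fibres of type $(p_j,q_j)$ always have fibred solid \emph{torus} neighbourhoods $T(p_j,r_j)$; solid Klein bottles occur only as neighbourhoods of $SE$-fibres, which carry no $(p,q)$ data and contribute nothing to the continued-fraction sum. So there is no ``continued-fraction construction for fibred solid Klein bottles'' to reinterpret, and the [FW] skeleton for $T_j$ applies verbatim. The real work you are missing is the one-vertex skeleton for the $N\times S^1$ and $N\widetilde{\times}S^1$ pieces.
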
 

The second part of Section  \ref{scomplexity}   refers to the closed case:  we state and prove the result in complete generality, i.e., both for the orientable and for the non-orientable case (see  Theorem \ref{closed}). For   non-orientable manifolds, which is the relevant new case, we obtain the following result. 
\begin{teoi}\label{closedi} Let $M=\left\{b;\left(\epsilon,g,(t,k)\right);\left(\ \mid \ \right);
\left((p_1,q_1),\ldots,(p_r,q_r)\right)\right\}$  be a non-orientable closed irreducible and $\mathbb P^2$-irreducible Seifert fibre space, then 
$$
c(M)\leq 6(1-\chi)+6t+\sum_{j=1}^r \left(S(p_j,q_j)+1\right);
$$
where $\chi = 2-2g$ if the base space of $M$ is orientable and  $\chi=2-g$ otherwise.
\end{teoi}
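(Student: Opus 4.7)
The plan is to exhibit an explicit simple spine of $M$ whose number of true vertices realizes the right-hand side of the claimed inequality, so that the bound on complexity follows from its definition. The approach is modelled on Martelli--Petronio's construction \cite{MP2} for the closed orientable case, suitably adapted to accommodate a possibly non-orientable base (when $\epsilon=n$) and the non-orientable fibered pieces encoded by the parameter $(t,k)$.

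I would decompose $M$ along a collection of incompressible tori and Klein bottles into three types of blocks: (i) an $S^1$-bundle over a punctured base surface, with one puncture for each exceptional fiber and each non-orientable fibered piece; (ii) $r$ fibered solid tori containing the exceptional fibers of type $(p_j,q_j)$; and (iii) $t$ fibered solid Klein bottles realizing the non-orientable fibering. On block (i) I would place a standard spine built from a fundamental polygon of the closed base surface, yielding $6(1-\chi)$ true vertices uniformly in the orientability of the base. Each exceptional fibered solid torus contributes at most $S(p_j,q_j)+1$ true vertices via the standard Farey-type decomposition realizing the Seifert fibration in the solid torus; the increase from the bordered bound $\max\{S(p_j,q_j)-3,0\}$ of Theorem \ref{comp_bordo_regi} to $S(p_j,q_j)+1$ accounts for the extra cost of closing off the corresponding boundary torus. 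Each non-orientable fibered Klein bottle contributes at most six true vertices, giving $6t$ in total. Summing the three contributions yields the claimed upper bound.

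The main technical obstacle is to verify that the matchings of the boundary cell structures of the blocks introduce no additional true vertices, so that the assembled polyhedron is simple. This requires choosing compatible triangulations on the torus and Klein bottle boundaries of each block, and a careful case analysis in the spirit of \cite{MP2} to realize each gluing without creating new true vertices; the non-orientable case is particularly delicate because the involution pattern produced by a cross-cap in the base must be lifted to a matching on the $S^1$-bundle side without enlarging the 0-skeleton. A secondary subtlety is that the irreducibility and $\mathbb{P}^2$-irreducibility hypotheses enter to ensure that the complexity of $M$ equals the minimum number of true vertices of a spine of $M$, so that the explicit spine construction directly furnishes the upper bound on $c(M)$.
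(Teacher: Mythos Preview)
Your overall strategy---decompose $M$ into a central $S^1$-bundle block, $r$ fibred solid tori, and $t$ ``exceptional'' blocks, equip each with a skeleton, and assemble---is exactly the route the paper takes. However, two points in your outline are genuinely off and would derail the count.

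First, the $t$ blocks are \emph{not} fibred solid Klein bottles. The parameter $t$ counts reflector \emph{circles} in the base orbifold (i.e., closed exceptional surfaces in $CE(M)$), and a fibred neighbourhood of such a component is $N\times S^1$ or $N\widetilde{\times}S^1$---an $N$-bundle over $S^1$ with torus or Klein bottle boundary---not the solid Klein bottle $SK\cong N\times I$. The paper builds an explicit skeleton for each such $(M_E,\emptyset)$ with exactly $3$ true vertices, while the main block $(M_0,\partial_-M_0)$ carries $6(1-\chi)+3s$ vertices with $s=t+r$ (the extra $3s$ come from the theta-curve intersections with $\partial_-M_0$); the solid-torus blocks, after inserting a single flip block to match theta graphs, contribute $S(p_j,q_j)-2$ each. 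Your allocation ($6(1-\chi)$ to the main block, $6$ to each exceptional block, $S(p_j,q_j)+1$ to each solid torus) happens to give the same total, but it does not correspond to an actual construction: there is no evident skeleton of the main block with only $6(1-\chi)$ vertices once it has $t+r$ boundary components, and a solid Klein bottle cannot be glued along a torus. You also omit the case $b=1$, which does occur for non-orientable closed $M$ (e.g.\ $\epsilon\in\{o_2,n_1,n_3,n_4\}$, $t=0$); the paper handles it by inserting a $(1,1)$-fibre and showing the vertex count does not increase.

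Second, your remark about the role of irreducibility and $\mathbb{P}^2$-irreducibility is a misconception. Complexity is \emph{defined} as the minimum number of true vertices over almost simple spines, so exhibiting any almost simple spine immediately gives an upper bound on $c(M)$ with no hypotheses needed. Those hypotheses matter for the identification of complexity with minimal tetrahedra number (hence for sharpness/lower bounds), not for the inequality you are proving; the paper's Theorem~\ref{closed} in fact does not assume them.
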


\medskip

We end this section by recalling some preliminary notions on spines and complexity of 3-manifolds.

Let $S$ be a simplicial complex and let $\sigma^n, \delta^{n-1}\in  S$ be two open simplices such that (i) $\sigma^n$ is \textit{principal}, i.e. $\sigma^n$  is not a proper face of any simplex in $S$ and (ii) $\delta^{n-1}$ is a \textit{free} face of $\sigma^n$, that is $\delta^{n-1}$ is not a proper face of any  simplex in $S$ different from $\sigma^n$.   The transition from $S$ to $S\setminus (\sigma^n\cup\delta^{n-1})$ is called an \textit{elementary simplicial collapse}.  A polyhedron $P$ \textit{collapses} to a sub-polyhedron $Q$ (denoted by $P\searrow Q$) if for some triangulation $(S,L)$ of the pair $(P,Q)$ the complex $S$ collapses onto  $L$ by a finite sequence of elementary simplicial collapses.

A 2-dimensional polyhedron $P$ is said to be \textit{almost simple} if
the link of each point $x\in P$ can be embedded into $K_4$, the
complete graph with four vertices. In particular, the polyhedron is called \textit{simple} if the link is homeomorphic  to  either a circle, or  a circle with a diameter, or  $K_4$. A \textit{true vertex} of an
(almost) simple polyhedron $P$ is  a point $x\in P$ whose link is
homeomorphic to $K_4$. 

A \textit{spine} of a compact connected 3-manifold $M$ with $\partial M\ne \emptyset$
is a  polyhedron $P$ embedded in $\textup{int}(M)$ such that  $M$ collapses to $P$. A spine of a closed
connected 3-manifold $M$ is a spine of $M\setminus\textup{int}(B^3)$, where $B^3$ is an embedded closed 3-ball. 
If $P\subset \textup{int}(M)$ is a polyhedron,  then $P$ is a spine of $M$ if and only if $M\setminus P\cong\partial M\times [0,1)$, if $\partial M\ne \emptyset$, and 
$M\setminus P\cong B^3$ otherwise.  The \textit{complexity} $c(M)$ of $M$ is the minimum number of true
vertices among all almost simple spines of $M$.

To construct a  spine for a given manifold, we will  decompose the manifold  into blocks (also called bricks) by  cutting it along  embedded tori or Klein bottles, then providing  skeletons for each block and finally assembling  the pairs block-skeleton together. We adapt in this way  the definition of skeleton given in \cite{FW} in order to cover also the case of non-orientable Seifert fibre spaces  (a  general theory in this direction is developed  \cite{MP}).    Denote by $\mathcal{H}$ the class of  pairs $(M, \partial_-M)$, where  $M$ is a  compact connected 3-manifolds  whose (possibly empty) boundary is composed by tori and Klein bottles and $\partial_-M\subseteq\partial M$ is a (possibly empty) union of connected components of $\partial M$.   Moreover, let $\partial _+M=\partial M \setminus \partial_-M$. A \textit{skeleton of}  $(M, \partial_-M)\in\mathcal H$ is a  sub-polyhedron $P$ of $M$ such that (i) $P\cup\partial M$ is simple, (ii) $M\searrow (P\cup\partial_- M)$ if $\partial_+ M\ne\emptyset$ or  $(M\setminus\textup{int}(B^3))\searrow (P\cup\partial_- M)$ if $\partial_+M=\emptyset$, where $B^3$ is an embedded closed 3-ball, and (iii) for  each component $C$ of $\partial M$ the space $P\cap C$  is either empty or  a non-trivial theta curve\footnote{A  \textit{non-trivial} theta curve $\theta$ on a torus or a Klein bottle $C$ is  a subset of $C$  homeomorphic to the theta-graph (i.e., the graph with 2 vertices and 3 edges joining them),   such that $C\setminus \theta$ is an open disk.} or a non-trivial simple closed curve. Note that if  $P\cap \partial_+ M=\emptyset$ then a skeleton of $(M, \emptyset)$ is a simple spine for $M$. Given two pairs  $(M_1, \partial_-M_1)$ and $(M_2, \partial_-M_2)$ in $\mathcal H$, let $P_i$ be a skeleton of $(M_i, \partial_-M_i)$ for $i=1,2$. Take two components $C_1\subset \partial_+ M_1$ and $C_2\subset\partial_- M_2$ such that $P_i\cap C_i\ne\emptyset$ and $(C_1,P_1\cap C_1)$ is homeomorphic to     $(C_2,P_2\cap C_2)$ and fix a homeomorphism $\varphi:  (C_1,P_1\cap C_1)\to  (C_2,P_2\cap C_2)$. We  define a new pair $(W,\partial_-W)\in\mathcal H$, where $W=M_1\cup_{\varphi}M_2$ and $\partial_-W=\partial_ -M_1\cup_{\varphi} (\partial_-M_2\setminus C_2)$, and we say that  $(W,\partial_-W)$ is obtained by \textit{assembling}  $(M_1, \partial_-M_1)$ and $(M_2, \partial_-M_2)$ and the skeleton $P=P_1\cup_{\varphi} P_2$ of $(W,\partial_-W)$ is obtained  by \textit{assembling} $P_1$ and $P_2$.
\end{section}

\begin{section}{Seifert fibre spaces}
\label{sseifert}
In this section  we first recall the definition of Seifert fibre spaces  given in \cite{Sc}, then we give a combinatorial description of these spaces as well as a classification up to fibre-preserving homeomorphism, extending the results of \cite{Fi} to the case with boundary.

\begin{subsection}{Definitions and examples}
Denote by   $D=\{z\in\mathbb C\mid \vert z\vert\leq 1\}$ the closed unit disk  and by   $I=[0,1]$ the real unit interval. Moreover, let $S^1=\partial D$ and $D^+=\{z\in D\mid  \textup{Re}(z)\geq 0\}$.  Finally, let $N,K$ and $T$ be  the M\"obius strip, the Klein bottle and the torus,  respectively.

A \textit{fibred solid torus $T(p,r)$ of type $(p,r)$}  with $p,r\in\mathbb Z$, $p>0$ and  $\gcd(p,r)=1$,  is the 3-manifold  obtained from $D\times I$ by identifying $D\times \{0\}$ with $D\times\{1\}$ by the homeomorphism $\varphi_{p,r}$ defined by
$$\varphi_{p,r}:\ (z,0) \longmapsto (ze^{2i\pi\frac rp},1).$$
The fibred solid torus $T(p,r)$ is the union of the disjoint circles, called \textit{fibres},
$\bigcup_{k=0}^{p-1}\left\{z e^{2i\pi k\frac r p}\right\}\times I$ under the identification,  for each  $z\in D$. 
The fibre corresponding to $z=0$  is called \textit{the axis} of $T(p,r)$. 
The map obtained by collapsing each fibre to a point is a (regular) $S^1$-fibre bundle   if  $p=1$, while it   
has a singularity corresponding to the axis if $p>1$.  Moreover, we call  $T(1,0)$ \textit{the trivial solid torus} which  $p$-fold covers  $T(p,r)$.  It is well known that two fibred solid tori $T(p,r)$ and $T(p',r')$ are fibre-preserving homeomorphic  
if and only if  $p=p'$ and $r\equiv \pm r'$ $\mod p$. 

Analogously, we can define the \textit{(fibred) solid Klein bottle $SK$} as the manifold  which can be obtained from $D\times I$ 
by identifying $D\times \{0\}$ with $D\times \{1\}$ by  the (orientation reversing) homeomorphism $\varphi$ defined by
\footnote{Observe that the   replacing of $\varphi$ with another  reflection on $D$ does not affect the  fibre-preserving homeomorphism 
type of the resulting space.} 
$$\varphi:\ (z,0)\longmapsto(\overline{z},1).$$ 
The fibred solid Klein bottle is the union of the disjoint circles, called \textit{fibres},  
$(\left\{z\}\times I)\cup(\{\overline{z}\right\}\times I)$ under the identification,   for each $z\in D$. 
Note that $SK\cong N\times I$ and it is double covered by a trivial fibred solid torus.

Moreover, we call \textit{half solid torus} (resp.   \textit{half solid Klein bottle}) the fibred manifold obtained from  $D^+\times I$ by gluing $D^+\times \{0\}$ with  $D^+\times \{1\}$ by the restriction of  $\varphi_{1,0}$ (resp.  $\varphi$) to $D^+\times \{0\}$. 

A \textit{Seifert fibre space} $M$ is a compact connected 3-manifold  admitting a decomposition into disjoint circles, called \textit{fibres}, such that each fibre  has a neighborhood in $M$ which is  a union of fibres and it is   fibre-preserving homeomorphic to 
\begin{itemize} 
\item either  a fibred solid torus or  Klein bottle, if the fibre is contained in  $\textup{int}(M)$; 
\item either  a half  solid torus or a half  solid  Klein bottle, if the fibre is contained in $\partial M$.
\end{itemize}
Note that the original definition of Seifert manifolds given in \cite{Se} excludes the case of  fibred solid Klein bottles.  One of the interesting features of this more general definition is that the class of Seifert fibre spaces coincides with that of compact connected  3-manifolds foliated by circles (see \cite{E}).

We say that a fibre of $M$ is \textit{regular} if it has a neighborhood fibre-preserving homeomorphic  
to a trivial fibred solid torus or to a half solid torus,  and \textit{exceptional} otherwise.
Hence exceptional fibres are either  isolated, corresponding to the axis of $T(p,r)$ with $p>1$, or 
form properly embedded compact surfaces, corresponding to the points $\left(\{z\}\times I\right)/\sim_{\varphi}\subset SK$ with $\textup{Im}(z)=0$. Moreover, each connected exceptional surface is either a properly embedded annulus or it is a closed surface obtained by gluing together   the two boundaries  of an annulus,  so  it is either  a torus or a Klein bottle. We denote by $E(M)$ (resp. $SE(M)$) the union of all  isolated (resp. non-isolated) exceptional  fibres of $M$ and call $E$-\textit{fibre} (resp. $SE$-\textit{fibre}) any  fibre contained in $E(M)$ (resp. $SE(M)$). Finally, we set $SE(M)=CE(M)\cup AE(M)$, where $CE(M)$ contains  the  closed components of $SE(M)$, while $AE(M)$ contains the non-closed ones. Note that if $M$ is orientable then $SE(M)=\emptyset$.

The components of $\partial M$  are either tori or Klein bottles: the toric components are regularly fibred, while a  Klein bottle component is  either regularly fibred (see the left part of Figure~\ref{figklein})  or it contains two exceptional fibres of  $AE(M)$ (see the right part of Figure~\ref{figklein}).\\

\begin{figure}[h!]                      
\begin{center}                         
\includegraphics[width=10cm]{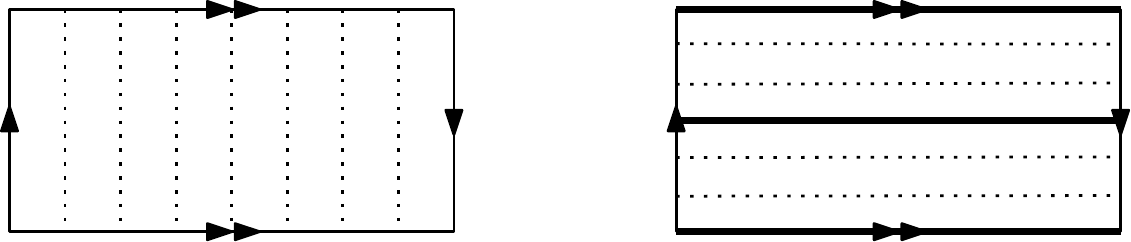}
\caption[legenda elenco figure]{The two different fibre structures of the Klein bottle boundary components of a Seifert fibre space.}\label{figklein}
\end{center}
\end{figure}

Given  a Seifert fibre space $M$, denote by $B$ the space obtained by collapsing  each fibre to a point and by  $f: M\to B$  the projection map. If $P\in B$ is the projection of a regular fibre $\phi$, then a tubular neighborhood of $P$  is either a disk (if $\phi\subset\textup{int}(M)$) or a half-disk (if $\phi\subset\partial M$). The possible models around points which are projections of an exceptional fibre are depicted in  Figure \ref{modelli}. As a consequence, $B$ is  a compact 2-dimensional orbifold,  called \textit{base space},    whose singular locus $\mathcal S$ coincides with  the projection of all   the exceptional fibres $E(M)\cup SE(M)$ of $M$  (thick lines and points in the figures represent singularities of the orbifold).   More precisely, the singularities of the orbifold are  (see also \cite{Sc}):
\begin{itemize}
\item cone points of cone angle $2\pi/p$ for $p>1$, corresponding to $E$-fibres having a neighborhood  fibre-preserving homeomorphic to
$T(p,r)$;
\item reflector arcs,  corresponding to  components of $AE(M)$ (i.e., annulus exceptional surfaces);
\item reflector circles, corresponding to  components of $CE(M)$ (i.e., tori or Klein bottles exceptional surfaces). 
\end{itemize}
Note that the orbifold $B$ has no corner points in its singular locus  and that  the restriction of $f$ to the counter-image  of the complement  of an open  tubular neighborhood  $N(\mathcal S)$ of  $\mathcal S\subset B$  is an $S^1$-bundle over the compact surface $B\setminus N(\mathcal S)$.

\begin{figure}[h!]                      
\begin{center}                         
\includegraphics[width=9cm]{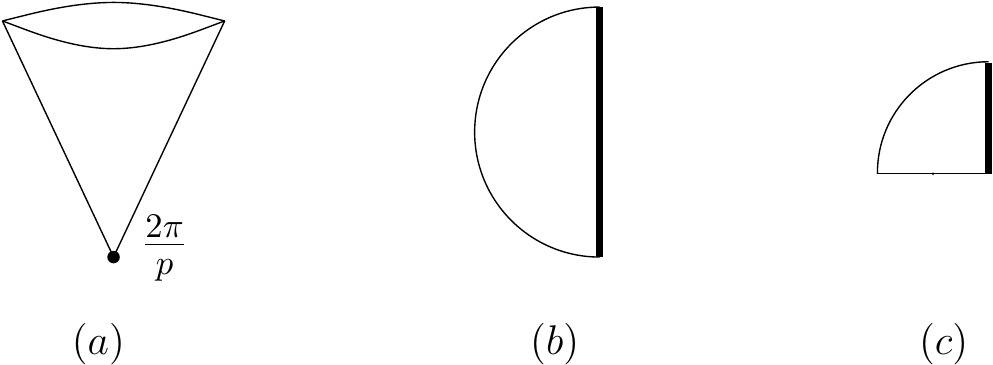}
\caption[legenda elenco figure]{Local models for singular points of the  base orbifold $B$ of a Seifert fibre space:  (a) cone points, (b) reflector points corresponding to internal fibres  and  (c) reflector points corresponding to boundary fibres.}\label{modelli}
\end{center}
\end{figure}

\begin{ese} \label{es_seifert} The  solid torus $D^2\times S^1$ and the solid
Klein bottle $SK$  are both examples of Seifert fibre spaces with non-empty boundary: the first one admits infinitely many fibre space structures $T(p,r)$ with one isolated exceptional fibre when $p>1$, while  $SK$  admits a unique Seifert fibre space structure, having  an  annulus  as exceptional set (see  Figure~\ref{modelli} (a) and (b) for a representation of  the base orbifold).  Other interesting examples of Seifert fibre spaces are 
the two $N$-bundles over $S^1$, namely $N\times S^1$ and  $N\widetilde{\times} S^1$. We recall that $N\widetilde{\times} S^1$ is the manifold obtained  from $N\times I$ by gluing  $(x,0)$ with $(g(x),1)$, where, referring to Figure~\ref{figanms}, the map $g$ is the composition of a  reflection along the exceptional fiber of $N$ (the thick line) with a reflection along the $\ell$ axis. In this case $\partial(N\widetilde{\times} S^1)=K$. The manifold $N\times S^1$  admits the trivial product fibration (without exceptional fibres) and a Seifert fibration  with a toric exceptional 
surface; while $N\widetilde{\times} S^1$  admits a Seifert fibration having an  isolated exceptional fibre of type $(2,1)$ and an exceptional  annulus,   and another one with a Klein bottle exceptional surface. The pictures in the first two rows of  Figure~\ref{example}  represent the base orbifold of all such fibrations (the meaning of the  labels in the figure  will  be explained in the next subsection). 
\end{ese} 

\begin{figure}[h!]                      
\begin{center}                         
\includegraphics[width=5cm]{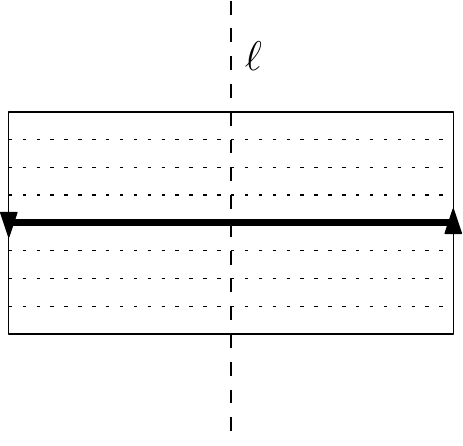}
\caption[legenda elenco figure]{The M\"obius  strip  foliated by circles.}\label{figanms}
\end{center}
\end{figure}
\end{subsection}

\begin{subsection}{Combinatorial description and fibre-preserving classification}
\label{construction}

A combinatorial description for  closed Seifert fibre spaces   is given in \cite{Fi} as well as the classification of these spaces up to fibre-preserving homeomorphisms. In this section  we  extend that description to the   case with boundary. 
  
Let $M$ be a Seifert fibre space with non-empty boundary and without exceptional fibres, then $f:M\to B$ is an $S^1$-bundle over $B$.
Denote by $\omega:H_1(B)\to \{1,-1\}$ the group homomorphism such that $\omega(\alpha)=1$ if and only if the 
orientation of a fibre in $M$ is preserved when a representative  loop of $\alpha$ in $B$ is traversed.  
If $B$ has genus $g\geq 0$ and $n>0$ boundary components then, referring to Figure~\ref{sup_bordo},
$$H_1(B)=\langle a_i,b_i,s_j\mid s_1+\cdots +s_{n}=0\rangle_{i=1,\ldots,g,\ j=1,\ldots,n}$$
if $B$ is orientable, and  
$$H_1(B)=\langle v_i,s_j\mid s_1+\cdots +s_{n}+ 2v_1+\cdots +2v_g=0\rangle_{i=1,\ldots,g,\ j=1,\ldots,n}\qquad (g\geq 1)$$
if $B$ is non-orientable.  We say that the $S^1$-bundle $f:M\to B$ is of type:

- $o_1$   if $\omega(a_i)=\omega(b_i)=1$ for all $i=1,\ldots,g$;

- $o_2$   if $\omega(a_i)=\omega(b_i)=-1$ for all $i=1,\ldots,g$ ($g\geq 1$);

- $n_1$   if $\omega(v_i)=1$ for all $i=1,\ldots,g$ ($g\geq 1$);

- $n_2$   if $\omega(v_i)=-1$ for all $i=1,\ldots,g$ ($g\geq 1$);

- $n_3$   if $\omega(v_1)=1$ and $\omega(v_i)=-1$ for all $i=2,\ldots,g$ ($g\geq 2$);

- $n_4$   if $\omega(v_1)=\omega(v_2)=1$ and $\omega(v_i)=-1$ for all $i=3,\ldots,g$ ($g\geq 3$).

\begin{figure}[h!]                      
\begin{center}                         
\includegraphics[width=13cm]{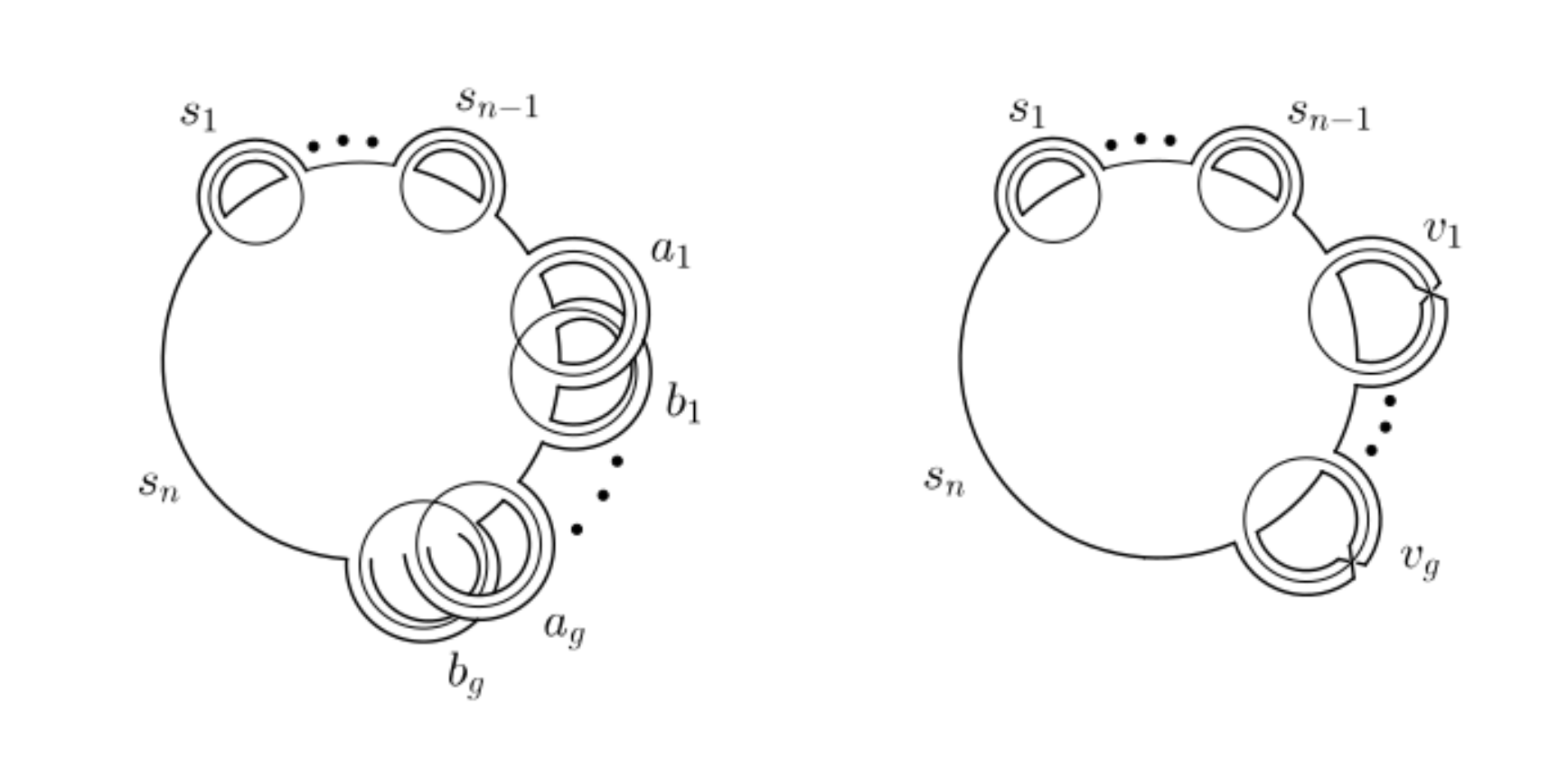}
\caption[legenda elenco figure]{Generators of $H_1(B)$.}\label{sup_bordo}
\end{center}
\end{figure}

The following theorem describes the classification of  $S^1$-bundles over a fixed surface, up to fibre-preserving homeomorphisms.

\begin{teo}[\upshape\cite{Fi}] \label{bundle} Let $B$ be a compact connected surface with non-empty boundary.
The fibre-preserving homeomorphism classes of $S^1$-bundles over $B$ are in 1-1 correspondence with the pairs $(k,\epsilon)$,
where $k$ is an even non-negative number  which counts the number of $s_j$ such that $\omega(s_j)=-1$ and 
(i)   $\epsilon=o_1,o_2$ when $B$ is orientable and  $\epsilon=n_1,n_2,n_3,n_4$ when $B$ is non-orientable, if $k=0$ or (ii)
 $\epsilon=o$ with $o:=o_1=o_2$  when  $B$ is orientable and $\epsilon=n $ with $n:=n_1=n_2=n_3=n_4$  when $B$ is non-orientable, if  $k>0$.
\end{teo}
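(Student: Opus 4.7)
\emph{Proof plan.} The plan is to translate the classification of $S^1$-bundles into an orbit classification problem for their monodromy homomorphism $\omega\colon H_1(B)\to\{\pm 1\}$ under the action of the mapping class group $\textup{MCG}(B)$, and then to enumerate the orbits. Since $\partial B\ne\emptyset$, the base $B$ is homotopy equivalent to a graph, hence $\pi_1(B)$ is free and every $S^1$-bundle $M\to B$ is classified up to bundle isomorphism by its monodromy $\pi_1(B)\to\pi_0(\textup{Homeo}(S^1))\cong\mathbb{Z}/2$, which factors through $\omega$. Two such bundles are fibre-preservingly homeomorphic if and only if their monodromies differ by a self-homeomorphism of $B$, so the theorem reduces to the computation of the orbits of $\textup{MCG}(B)$ on $\textup{Hom}(H_1(B),\mathbb{Z}/2)$.

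Self-homeomorphisms of $B$ permute the boundary components, so the multiset $\{\omega(s_j)\}_{j=1}^{n}$ is an invariant; applying $\omega$ to the defining relation in $H_1(B)$ gives $\prod_j\omega(s_j)=1$, which forces $k:=\#\{j:\omega(s_j)=-1\}$ to be a non-negative even integer. Geometrically, $\omega(s_j)=-1$ corresponds to a regularly fibred Klein bottle component of $\partial M$ (left of Figure~\ref{figklein}), so $k$ is a topological invariant of $M$. If $k>0$, pick $j_0$ with $\omega(s_{j_0})=-1$: the circle $\partial_{j_0}B$ is a two-sided essential simple closed curve in $B$, and a handle slide of any internal generator $x\in\{a_i,b_i,v_i\}$ across $\partial_{j_0}B$ is realized by a self-homeomorphism of $B$ sending $x$ to $x+s_{j_0}$ in $H_1(B)$, thus flipping $\omega(x)$. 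Composing such slides, we normalize $\omega$ to be trivial on every internal generator, producing a single orbit of type $o$ (orientable $B$) or $n$ (non-orientable $B$).

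When $k=0$, boundary slides act trivially on internal signs, and the classification reduces to the $\textup{MCG}(B)$-action on the restriction of $\omega$ to the internal generators. In the orientable case, $\textup{MCG}(B)$ surjects onto $\textup{Sp}_{2g}(\mathbb{Z})$ on the genus part of $H_1$; its mod-$2$ reduction $\textup{Sp}_{2g}(\mathbb{F}_2)$ acts transitively on $\mathbb{F}_2^{2g}\setminus\{0\}$, so only two orbits arise: the trivial one (type $o_1$, giving $M=B\times S^1$) and the non-trivial one, which via a symplectic change of basis can be put in the form $\omega(a_i)=\omega(b_i)=-1$ for all $i$ (type $o_2$). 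In the non-orientable case, the image of $\textup{MCG}(B)$ in $\textup{GL}_g(\mathbb{F}_2)$ is the subgroup generated by the transpositions $v_i\leftrightarrow v_j$ coming from Dehn twists along the two-sided curves $v_i+v_j$ and by the substitutions induced by the crosscap ($Y$) homeomorphisms; this action yields exactly four orbits on $\mathbb{F}_2^g$, and each has a canonical representative corresponding to one of $n_1,\ldots,n_4$, with the constraints $g\ge 1, 2, 3$ precisely ensuring that the respective representative exists.

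The main obstacle lies in this last step: one must pin down the image of $\textup{MCG}(N_{g,n})$ in $\textup{GL}_g(\mathbb{F}_2)$ and verify that it has exactly four orbits, realized by the listed normal forms. This requires an explicit analysis of the action of Dehn twists along two-sided essential curves and crosscap slides, together with the construction of MCG-invariant functionals separating the four normal forms; combined with the orientable analysis above, this produces the claimed bijection with pairs $(k,\epsilon)$.
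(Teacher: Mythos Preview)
The paper does not give its own proof of this theorem: it is stated with a citation to Fintushel~\cite{Fi} and used as a black box in the construction of Section~\ref{construction}. So there is no argument in the paper to compare yours against.

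That said, your outline is the natural one and is essentially how the result is obtained in the literature. A few comments on the details. Your reduction to $\textup{MCG}(B)$-orbits on $\textup{Hom}(H_1(B),\mathbb{Z}/2)$ is correct because $\partial B\neq\emptyset$ kills the Euler class, and fibre-preserving homeomorphisms descend to homeomorphisms of $B$. For the $k>0$ step, your ``handle slide'' is most cleanly justified via the description in Remark~\ref{costruzione}: writing $B$ as a disk with $1$-handles (one for each generator $a_i,b_i,v_i,s_j$), the slide of an internal handle over the $s_{j_0}$-handle is a genuine self-homeomorphism of $B$ inducing $x\mapsto x+s_{j_0}$ on $H_1$; invoking this model avoids the ambiguity of talking about sliding across a boundary curve. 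The orientable $k=0$ case is fine: transitivity of $\textup{Sp}_{2g}(\mathbb F_2)$ on nonzero covectors gives exactly the two types $o_1,o_2$, and they are separated by orientability of the total space.

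The only part that remains a genuine sketch is the non-orientable $k=0$ orbit count. You correctly identify the generators of the $\textup{MCG}(N_{g,n})$-action mod~$2$ (transpositions from twists along $v_i+v_j$ and the $Y$-homeomorphisms), but you stop short of producing the invariants that separate $n_1,\dots,n_4$ and of checking that every $\omega$ reduces to one of the four normal forms. One clean way to finish is to note that the total space being orientable singles out $n_2$, that $\omega\equiv 1$ singles out $n_1$, and that for the remaining $\omega$'s the parity of $\#\{i:\omega(v_i)=1\}$ is preserved by all the generators above, giving $n_3$ versus $n_4$; transpositions plus twists along curves of the form $v_{i_1}+v_{i_2}+v_{i_3}+v_{i_4}$ then reduce within each parity class to the stated representative. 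Filling in this last paragraph would make your argument a complete, self-contained proof of the cited theorem.
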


Now we are ready to introduce the combinatorial description for Seifert fibre spaces. Let
\begin{itemize}
\item $g,t,k,m_+,m_-,r$ be non-negative integers such that $k+m_-$  is even and $k\leq t$;    
\item $\epsilon$ be a symbol belonging to the set $\cal E=\{o,o_1,o_2,n,n_1,n_2,n_3,n_4\}$  such that (i) $\epsilon =o,n$ if and only if $k+m_-> 0$, (ii) if $\epsilon=n_4$ then $g\geq 3$, (iii) if $\epsilon=n_3$ then $g\geq 2$ and (iv) if  $\epsilon=o_2,n, n_1,n_2$ then $g\geq 1$; 
\item  $h_1,\ldots,h_{m_+}$ and  $k_1,\ldots, k_{m_-}$ be non-negative integers such that $h_1\leq \cdots\leq h_{m_+}$ and  $k_1\leq \cdots\leq k_{m_-}$;
\item $(p_j,q_j)$ be lexicographically ordered  pairs of coprime integers such that $0<q_j<p_j$ if $\epsilon=o_1,n_2$ and $0<q_j\leq p_j/2$ otherwise, for $j=1,\ldots, r$;  
\item $b$ be an arbitrary integer if $t=m_+=m_-=0$ and $\epsilon=o_1,n_2$; $b=0$ or $1$ if $t=m_+=m_-=0$ and  $\epsilon=o_2,n_1,n_3,n_4$ and no $p_j=2$; $b=0$ otherwise.
\end{itemize}

The previous parameters with the given conditions are called {\it normalized}, and we denote by $$\left\{b;\left(\epsilon,g,(t,k)\right);\left(h_1,\ldots,h_{m_+}\mid k_1,\ldots, k_{m_{-}}\right);\left((p_1,q_1),\ldots,(p_r,q_r)\right)\right\}$$ 
the Seifert fibre space constructed as follows.  

If $b=0$, denote by $B^*$  a compact connected genus $g$ surface having  $s=r+t+m_++m_-$ boundary components and being orientable 
if $\epsilon=o,o_1,o_2$ and non-orientable  otherwise. By Theorem~\ref{bundle} there is   a unique   $S^1$-bundle over $B^*$ associated to the pair 
$(k+m_-,\epsilon)$, up to fibre-preserving homeomorphism: call it $M^*$ (see Remark~\ref{costruzione}  for the details of this construction). 
Note that $M^*$ has $k+m_-$ boundary components which are Klein bottles and the remaining  $r+t-k+m_+$ ones are tori.
Denote by $c_1,\ldots, c_{s}$  the boundary components of $B^*$, numbering them  such that the last $k+m_-$ correspond to
Klein bottles in $M^*$.
Let   $\partial_1 B^*=c_{1}\cup\ldots\cup c_{r+t-k+m_+}$ and  $\partial_2 B^*= \partial B^* \setminus \partial_1 B^*$. 
Finally, denote by   $s^*:B^*\to M^*$  a section of $f^*:M^*\to B^*$. 
\begin{itemize}

\item[(i)] For  $j=1,\ldots,r$ fill the  toric boundary component $(f^*)^{-1}(c_{j})$ of $M^*$ with a solid torus by sending the boundary  of a meridian disk of the solid torus into the curve $p_jd_j+q_jf_j$,  where $f_j$ is a fibre and $d_j=s^*(c_{j})$; 

\item[(ii)]  for  $i=1,\ldots,m_+$ (resp. $j=1,\ldots,m_-$)    
consider $h_i$ (resp. $k_j$)  disjoint closed  arcs  
inside the boundary component  $c_{i+r}$ of $\partial_1 B^*$ (resp. $c_{j+r+t-k+m_+}$ of $\partial_2 B^*$) and,  for each arc and each point $x$ of  the arc, 
attach a   M\"obius strip   along the boundary to  the fibre $(f^*)^{-1}(x)$, where  the M\"obius strip is foliated by circles as depicted in Figure~\ref{figanms}.  On the whole, we attach $h_i$ (resp. $k_j$)
disjoint copies of $N\times I$ to the boundary 
of $M^*$ corresponding to the counter-image of $c_{i+r}$ (resp. $c_{j+r+t-k+m_+})$. So the boundary component  remains unchanged if $h_i=0$ (resp. $k_j=0$) and it  is partially filled 
 otherwise. In the latter case instead of the initial boundary component we have 
$h_i$ (resp. $k_j$) Klein bottle boundary components;

\item[(iii)]  for  $i=1,\ldots,t-k$ (resp. $j=1,\ldots,k$) glue a copy of $N\times S^1$ (resp.  $N\widetilde{\times} S^1$) to each toric (resp. Klein bottle) boundary component  of   $M^*$  along the boundary via a homeomorphism which is fibre-preserving with respect to the fibration depicted in Figure \ref{example} ($a'$) (resp. $(b')$). Namely,  as in the previous step, for each point $x\in c_{i+r+m_+}$  (resp. $x\in c_{j+r+t-k+m_++m_-}$) we attach a M\"obius strip  along the boundary to  the fibre $(f^*)^{-1}(x)$.
\end{itemize}

If $b\neq 0$ (and therefore $t=m_+=m_-=0$) we modify the above construction as follows: we take a surface  $B^*$ with  $r+1$ boundary components and fill  the first $r$-ones boundary components of $M^*$ as described in (i)  and the last one by sending    the boundary  of a meridian disk of the solid torus into $d_{r+1}+bf_{r+1}$ (i.e., with  $(p_{r+1},q_{r+1})=(1,b)$).

The resulting manifold is the Seifert fibre space $$M=\left\{b;\left(\epsilon,g,(t,k)\right);\left(h_1,\ldots,h_{m_+}\mid k_1,\ldots, k_{m_{-}}\right);\left((p_1,q_1),\ldots,(p_r,q_r)\right)\right\}.$$  

Note that when $t=m_+=m_-=0$, the above construction gives the classical closed Seifert fibre space $\left(b;\epsilon,g;(p_1,q_1),\ldots,(p_r,q_r)\right)$ of \cite{Se}.

From the above construction it follows that the exceptional set of $M$ is composed by:  
 (i) an $E$-fibre  of type\footnote{Note  that a fibred tubular neighborhood of an $E$-fibre of type $(p_j,q_j)$ is fibre-preserving  equivalent to $T(p_j,r_j)$ with  $r_jq_j\equiv 1\mod p_j$.}    $(p_j, q_j)$ for $j=1,\ldots, r$, (ii)    $t$  closed exceptional  surfaces, $k$ of which are Klein bottles while the remaining $t-k$ are tori and (iii) $t'=h_1+\cdots+h_{m_+}+k_1+\cdots+k_{m_{-}}$ exceptional  surfaces homeomorphic to  annuli. Moreover, the boundary of $M$ has $t'$ components  which are Klein bottles with two exceptional fibres (contained in $AE(M)$) and, for each $h_i=0$ (resp.   $k_j=0$),  a toric (resp. Klein bottle) boundary component without exceptional fibres.

The singular locus $\mathcal S$ of the base orbifold $B$   (that will be depicted using thick lines and  points in figures)  consists of: (i)
$r$ cone points of cone angles $2\pi/p_1,\ldots,2\pi/p_r$ (in figures each cone point will be decorated with the corresponding pair $(p_j,q_j)$), (ii) $t$ reflector circles and (iii) $t'$ reflector arcs.
The underlying surface of the orbifold has genus $g$ and it is orientable if and only if $\epsilon=o,o_1,o_2$.   
Moreover,  it has $m_++m_-+t$ boundary components: one boundary component containing
$h_i$ (resp. $k_j$) disjoint reflector arcs for each $i=1,\ldots,m_+$ (resp. $j=1,\ldots,m_-$), and one boundary components for each reflector circle. We  decorate by the  symbol ``$-$'' each  boundary component of the underlying surface having a Klein bottle as counterimage in $M$.

\begin{oss}
Conditions on the invariants ensuring the orientability and the closeness of a Seifert fibre space 
are the following:
\begin{itemize}
\item[(i)] $M$ is orientable if and only if $t=m_-=0$,   $h_i=0$ for all $i=1,\ldots, m_+$ and $\epsilon=o_1,n_2$;
\item[(ii)] $M$ is closed if and only if  $m_+=m_-=0$. In this case  the combinatorial description coincide with the one of \cite{Fi}.
\end{itemize}
\end{oss}

\begin{ese} 
The   Seifert fibre space   $\left\{0;\left(o,4,(1,1)\right); \left(1\mid 0\right);\left((3,1),(5,2)\right)\right\}$ is depicted in  Figure~\ref{figcomb}. It has two $E$-fibres of type $(3,1)$ and $(5,2)$, one Klein bottle exceptional surface and one annulus exceptional surface. The boundary consists of two Klein bottles, one with two exceptional fibres and  another without exceptional fibres.
\end{ese} 
 
\begin{figure}[h!]                      
\begin{center}                         
\includegraphics[width=9cm]{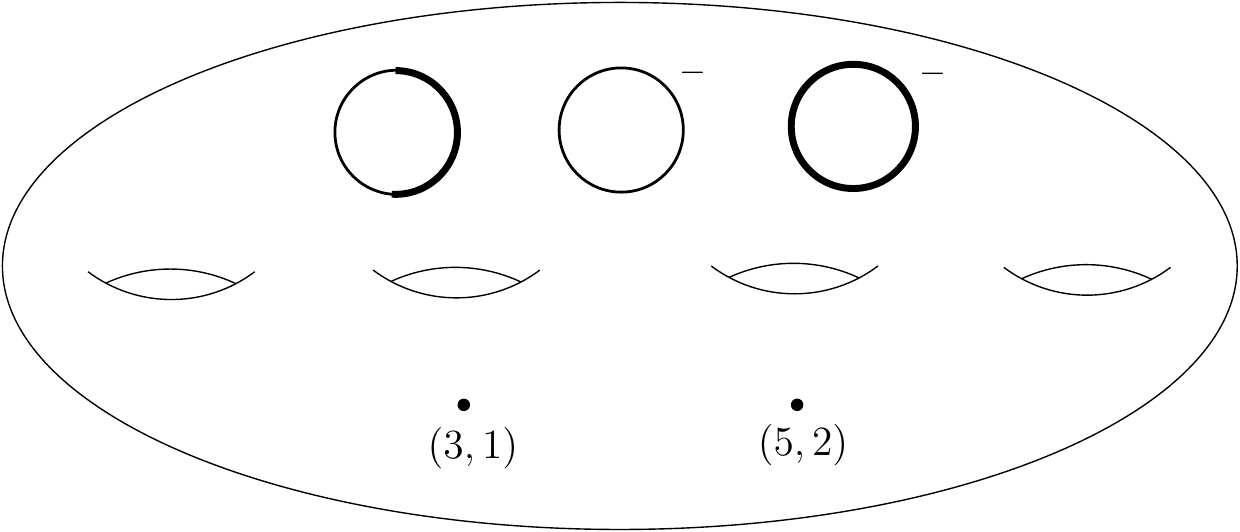}
\caption[legenda elenco figure]{The Seifert fibre space $\left\{0;\left(o,4,(1,1)\right); \left(1\mid 0\right); \left((3,1),(5,2)\right)\right\}$.}\label{figcomb}
\end{center}
\end{figure}

\begin{oss}
\label{rem}
Let  $M$ be a Seifert fibre space such that  $M\setminus SE(M)$ is orientable, and therefore  $M=\left\{b;\left(\epsilon,g,(t,0)\right);\left(h_1,\ldots,h_{m_+}\mid \ \right);\left((p_1,q_1),\ldots,(p_r,q_r)\right)\right\}$  with  $\epsilon\in \{o_1,n_2\}$. If $M$ is closed and orientable   (i.e., $t=m_+=0$), by reversing a fixed  orientation on $M$  we obtain $$-M=\left\{-b-r;\left(\epsilon,g,(0,0)\right);\left(\ \mid\ \right);\left((p_1,p_1-q_1),\ldots,(p_r,p_r-q_r)\right)\right\}$$ (see \cite[p.18]{O}). So, if we do not take care of the orientation,  we can suppose  $b\geq -r/2$. Moreover, if $b=-r/2$ we can assume $0<q_{l}< p_{l}/2$, where $l$ is the minimum $j$, if any, such that $p_{j}>2$.
In all the other cases (i.e., $\partial M\ne \emptyset$ or $M$ non orientable)  $b=0$, and, reversing the orientation on $M\setminus SE$ we get the equivalent space $\left\{0;\left(\epsilon,g,(t,0)\right);\left(h_1,\ldots,h_{m_+} \mid\ \right);\left((p_1,p_1-q_1),\ldots, (p_r,p_r-q_r)\right)\right\}$. So, we can suppose $0<q_{l}<p_{l}/2$, where $l$ is as above.\\

\end{oss}

\begin{teo} \label{combi} Every Seifert fibre space is uniquely  determined, up to fibre-preserving homeomorphism,  by the normalized set of parameters  
$$\left\{b;\left(\epsilon,g,(t,k)\right);\left(h_1,\ldots,h_{m_+}\mid k_1,\ldots, k_{m_{-}}\right);\left((p_1,q_1),\ldots,(p_r,q_r)\right)\right\},$$
and, when  $M\setminus SE(M)$ is orientable (i.e., $\epsilon\in\{o_1,n_2\}$),  by the following additional conditions: (i) if $M$ is closed and orientable, then   $b\geq - r/2$ and,  if $b= -r/2$,  $0<q_{l}< p_{l}/2$,  (ii) if  $M$ is non-closed or non-orientable then $0<q_{l}< p_{l}/2$; where  $l$ is the minimum $j$, if any,  such that $p_{j}>2$.
\end{teo}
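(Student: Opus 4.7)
The strategy is to extend the closed-case argument of \cite{Fi} to the bordered setting. Given a Seifert fibre space $M$, I would first remove a fibred open tubular neighborhood of $f^{-1}(\mathcal S)\subset M$: the resulting manifold $M^*$ is the total space of an $S^1$-bundle $f^*\colon M^*\to B^*$, where $B^*$ has genus $g$ and boundary components of three types --- those coming from the $r$ cone points of $\mathcal S$ (always toric), those coming from the $t$ reflector circles ($k$ of which have Klein bottle inverse image and $t-k$ toric), and those coming from the boundary components of $B$ carrying reflector arcs (split into $m_+$ toric pieces with $h_i$ arcs and $m_-$ Klein bottle pieces with $k_j$ arcs). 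Applying Theorem \ref{bundle} to $f^*$ produces the symbol $\epsilon\in\mathcal E$ and forces $k+m_-$ to be even. Choosing a section $s^*$, each cone-point boundary contributes a Seifert pair $(p_j,q_j)$ from the attaching slope of the surgery solid torus, and in the closed classical case the integer $b$ is read off from the final gluing. Sorting the $(p_j,q_j)$ lexicographically and the sequences $h_i,k_j$ monotonically yields a normalized parameter set, proving existence.

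For uniqueness I would analyse each residual ambiguity in turn. Changing the section $s^*$ shifts each $q_j$ modulo $p_j$ by an integer number of fibres, which both forces and permits the normalization $0<q_j<p_j$, with the cumulative shift absorbed by $b$ in the closed classical case. A fibre-preserving self-homeomorphism of $T(p_j,r_j)$ may reverse fibre orientation, sending $q_j$ to $p_j-q_j$, and whether this local operation extends globally depends on whether the ambient bundle admits a fibre orientation that is coherent at $f^{-1}(c_j)$. For $\epsilon\notin\{o_1,n_2\}$ the manifold $M\setminus SE(M)$ is non-orientable, and the fibre orientation can be inverted independently at each exceptional fibre by a fibre-preserving self-homeomorphism supported near it; this allows the independent normalization $0<q_j\le p_j/2$. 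For $\epsilon\in\{o_1,n_2\}$ the manifold $M\setminus SE(M)$ is orientable, and the only remaining symmetry is the global orientation reversal described in Remark \ref{rem}, which inverts every fibre simultaneously. In the closed orientable case this produces the identification $\{b;\ldots\}\cong\{-b-r;\ldots\}$, resolved by requiring $b\ge -r/2$ together with the tie-breaker $0<q_l<p_l/2$ at $b=-r/2$; in the non-closed or non-orientable case it forces $b=0$ and the single condition $0<q_l<p_l/2$ selects a unique representative. The discrete reordering ambiguities for cone points, reflector arcs and reflector circles are killed by the monotonicity and lexicographic conventions, and the closed case then specializes to \cite{Fi}.

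The step I expect to be most delicate is the claim that, in the regime $\epsilon\in\{o_1,n_2\}$, no fibre-preserving self-homeomorphism of $M$ can invert the fibre orientation at some exceptional fibres while preserving it at others, beyond the single global reversal of Remark \ref{rem}. This reduces to the fact that in an $S^1$-bundle of type $o_1$ or $n_2$ the total space is orientable and any fibre-preserving self-homeomorphism either preserves or reverses the fibration orientation uniformly on all fibres; local flips $q_j\leftrightarrow p_j-q_j$ must therefore be performed simultaneously at every Seifert fibre, which is exactly the residual identification encoded by the additional normalization conditions in the statement. Once this rigidity is secured, the remaining bookkeeping --- identifying $k$ versus $m_-$ boundary contributions, matching reflector-arc counts, and verifying that an isotopy of the arcs on each boundary of $B$ can be realised by a fibre-preserving homeomorphism of $M$ --- is combinatorial and reduces each ambiguity to the ones already accounted for.
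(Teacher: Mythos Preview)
Your proposal is correct in spirit and would yield a valid proof, but it takes a genuinely different route from the paper. You rebuild the Fintushel analysis in the bordered setting from scratch: strip off a neighbourhood of the singular set, classify the resulting $S^1$-bundle via Theorem~\ref{bundle}, and then track every ambiguity (section change, local fibre reversal, global orientation reversal, reordering) by hand. The delicate rigidity step you isolate --- that for $\epsilon\in\{o_1,n_2\}$ no fibre-preserving self-homeomorphism can flip the fibre orientation at some exceptional fibres and not others --- is indeed the heart of such a direct argument.

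The paper instead \emph{reduces} the bordered case to the closed one. It first observes that a fibre-preserving homeomorphism $M\to\bar M$ restricts to one between the boundaries, which immediately forces $m_+=\bar m_+$, $m_-=\bar m_-$ and matches the sequences $h_i,\bar h_i$ and $k_j,\bar k_j$ (these are read off from the combinatorics of the boundary orbifold). Then it performs a \emph{canonical filling}: each Klein bottle boundary component carrying two exceptional fibres is capped with a solid Klein bottle, each torus boundary with $N\times S^1$, and each exceptional-fibre-free Klein bottle boundary with $N\widetilde\times S^1$. Because the filling depends only on the fibred boundary type, the homeomorphism $M\to\bar M$ extends to one between the resulting \emph{closed} Seifert fibre spaces, which have exactly the parameters $\{b;(\epsilon,g,(t,k));(\ \mid\ );((p_j,q_j))\}$ and their barred counterparts. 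Now \cite[Theorem~2]{Fi} and Remark~\ref{rem} finish the job.

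What each approach buys: the paper's filling trick is short and offloads all the subtle fibre-orientation bookkeeping onto the already-proved closed result, so the rigidity issue you flag never has to be confronted directly. Your direct approach is more self-contained and makes explicit \emph{why} the $\epsilon\in\{o_1,n_2\}$ case behaves differently, at the cost of reproving machinery that \cite{Fi} already supplies.
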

\begin{proof}
If $M=\left\{b;\left(\epsilon,g,(t,k)\right);\left(h_1,\ldots,h_{m_+}\mid k_1,\ldots, k_{m_{-}}\right);\left((p_1,q_1),\ldots,(p_r,q_r)\right)\right\}$ 
and $\bar M=\left\{\bar b;\left(\bar \epsilon,\bar g,(\bar t,\bar k)\right);\left(\bar h_1,\ldots,\bar h_{\bar m_+}\mid \bar k_1,
\ldots, \bar k_{\bar m_{-}}\right);\left((\bar p_1,\bar q_1),\ldots,(\bar p_{\bar r},\bar q_{\bar r})\right)\right\}$ 
are fibre-preserving homeomorphic then, by looking at the boundaries of the base orbifolds, it is clear that  $m_+=\bar m_+$, $m_-=\bar m_-$,   $h_{i}=\bar h_i$, $k_{j}=\bar k_j$ for $i=1,\ldots,m_+$ and $j=1,\ldots,m_-$. If  we fill, respecting the fibration, each boundary component with two exceptional fibres with a solid Klein bottle, each toric boundary component with  $N\times S^1$, and each Klein bottle boundary component without exceptional fibres with  $N\widetilde \times S^1$,
we obtain the two closed Seifert fibre spaces 
$\left\{b;\left(\epsilon,g,(t,k)\right);\left(\ \mid \ \right);\left((p_1,q_1),\ldots,(p_r,q_r)\right)\right\}$ and $\left\{\bar b;\left(\bar \epsilon,\bar g,(\bar t,\bar k)\right);\left(\ \mid \ \right);\left((\bar p_1,\bar q_1),\ldots,(\bar p_{\bar r},\bar q_{\bar r})\right)\right\}$. So the result follows directly from \cite[Theorem 2]{Fi} and Remark \ref{rem}.
\end{proof}

From now on  we will always suppose the parameters of Seifert fibre spaces to be normalized.

\begin{figure}[h!]                      
\begin{center}                         
\includegraphics[width=8cm]{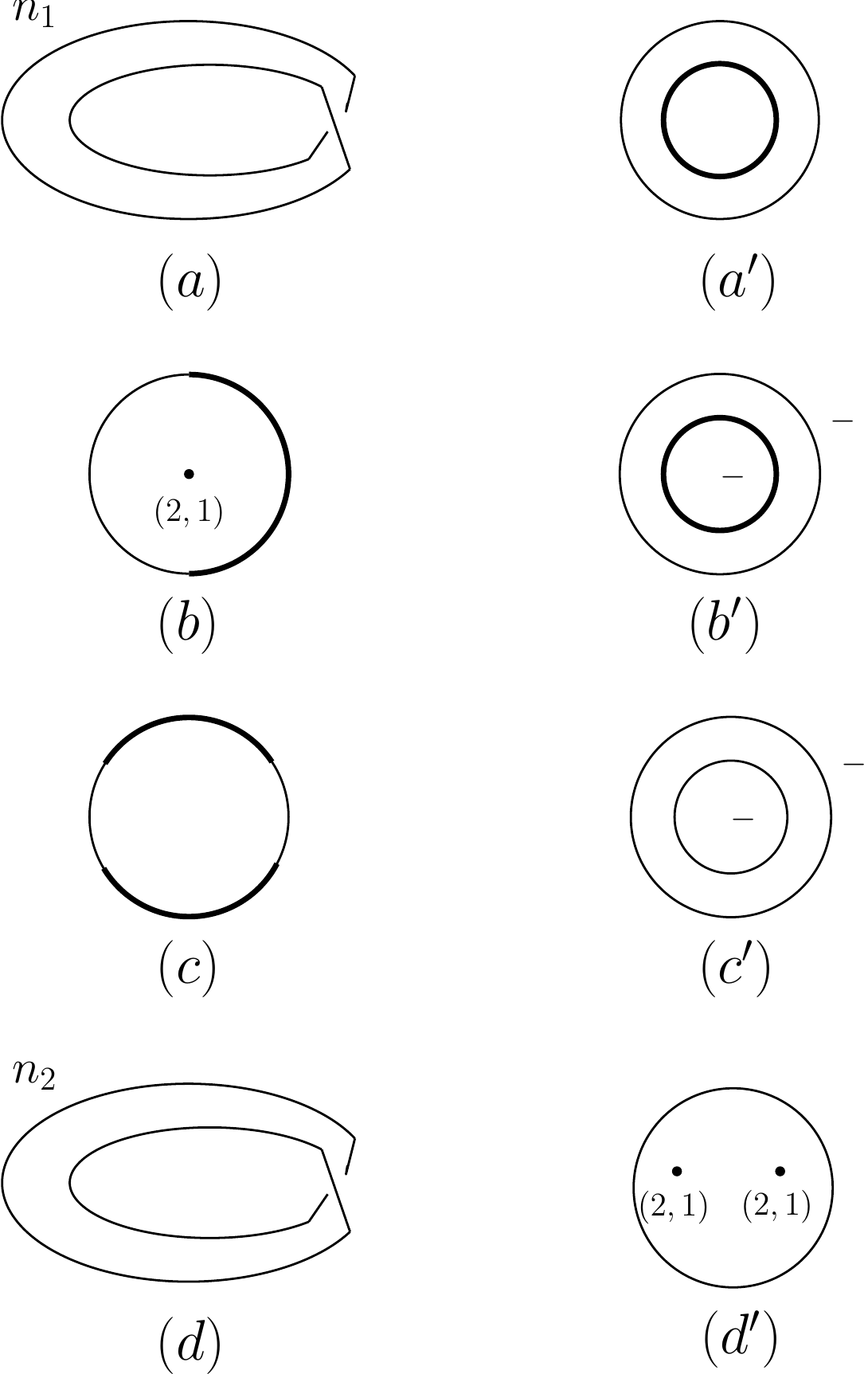}
\caption[legenda elenco figure]{The Seifert fibre structures over:   $(a,\ a')$ $N\times S^1$, $(b,\ b')$ $N\widetilde{\times} S^1$, $(c,\ c')$ $K\times I$, $(d,\ d')$ $K\widetilde{\times}I$.}\label{example}
\end{center}
\end{figure}

\begin{ese}  \label{es_seifert_combi} 
The solid torus $D^2\times S^1$ admits the combinatorial descriptions
 $\left\{0;\left(o_1,0,(0,0)\right); \left(0\mid\ \right); (p,q)\right\}$ if $p>1$, and $\left\{0;\left(o_1,0,(0,0)\right); \left(0\mid\ \right); \ \right\}$ if \hbox{$p=1$}, while  the solid Klein bottle admits the description
 $\left\{0;\left(o_1,0,(0,0)\right); \left(1\mid\ \right); \ \right\}$ (see also Example \ref{es_seifert}). Other important  examples are depicted in Figure~\ref{example}:  the manifold $N\times S^1$ has two different Seifert  space structures, up to fibre-preserving homeomorphism, namely $(a)$ the trivial $S^1$-bundle over $N$, whose description is  $\left\{0;\left(n_1,1,(0,0)\right); \left(0\mid\ \right); \ \right\}$, and  $(a')$ that is $\left\{0;\left(o_1,0,(1,0)\right); \left(0 \mid\ \right); \ \right\}$. Also  $N\widetilde{\times} S^1$  can be fibred both as   $\left\{0;\left(o_1,0,(0,0)\right); \left(1\mid\ \right); (2,1) \right\}$, depicted in $(b)$,  and  $\left\{0;\left(o,0,(1,1)\right); \left(\ \mid 0\right); \ \right\}$, depicted in $(b')$.  Pictures   $(c)$  and $(c')$ represent the two possible Seifert structures over  $K\times I$ (i.e., $\left\{0;\left(o_1,0,(0,0)\right); \left(2\mid\ \right); \  \right\}$ and   $\left\{0;\left(o,0,(0,0)\right); \left(\ \mid 0,0\right);\  \right\}$, respectively). Finally,  
in $(d)$  and  $(d')$ there are two different    Seifert structures of $K\widetilde{\times}I$ a twisted  $I$-bundle over $K$, that are $\left\{0;\left(n_2,1,(0,0)\right); \left(0 \mid \ \right); \ \right\}$ and   $\left\{0;\left(o_1,0,(0,0)\right); \left(0 \mid\ \right); ((2,1),(2,1))\right\}$, respectively.  As proved in \cite[Proposition A.1]{AM}, the previous four manifolds  and  $T\times I=\left\{0;\left(o_1,0,(0,0)\right); \left(0,0\mid\ \right);\ \right\}$ are all the possible $I$-bundles over the torus  ($T\times I$ and $T\widetilde{\times}I=N\times S^1$) and the Klein bottle ($K\times I$, $K\widetilde{\times}I$ and $K\widetilde{\widetilde{\times}}I=N\widetilde{\times}S^1$).
\end{ese}

\begin{oss} \label{costruzione} We  recall how to construct an $S^1$-bundle of type $(k,\epsilon)$ over the compact connected surface $B^*$ 
with $\partial B^*\ne\emptyset$. The surface  $B^*$  is homeomorphic to a disk 
with $r$ attached bands $\beta_1,\ldots, \beta_r$,  where $r=\textup{rank}(H_1(B^*))$. Let
 $y_1,\ldots,y_r$ be the generators  of $H_1(B^*)$   depicted in  Figure~\ref{sup_bordo} (i.e., $y_l=a_i,b_i,s_j$ if $B^*$ is orientable and $y_l=v_i,s_j$ otherwise). For $l=1,\ldots, r$ denote by $d_l$ the oriented cocore of  $\beta_l$; cutting $B^*$ along $A=d_1\cup\cdots\cup d_l$ we obtain  a disk $\Delta$.    Let   $d'_l$ and $d''_l$  be the two oriented copies of $d_l$ in $\Delta$ and, for each $x\in d_l$, denote by $x'$ and $x''$ the two  points in $d'_l$ and $d''_l$ corresponding to $x$, respectively. Finally, let $\omega: H_1(B^*)\to \{-1,1\}$ be the homomorphism  associated to the pair $(k,\epsilon)$. Since $\Delta$ is contractible, $\Delta\times S^1$ is the unique  $S^1$-bundle over $\Delta$. Attach $d'_l\times S^1$ to $d''_l\times S^1$ via $(x',e^{i\theta})\mapsto(x'',e^{i\theta})$  if $\omega(x_l)=1$ and via $(x',e^{i\theta})\mapsto(x'',e^{-i\theta})$  otherwise. The resulting manifold $M^*$ is the required  $S^1$-bundle over $B^*$. 
\end{oss}

\end{subsection}

\end{section}

\begin{section}{Complexity of Seifert fibre spaces}
\label{scomplexity}
\begin{subsection}{The case with non-empty boundary}
In this subsection we analyze the case $\partial M \ne \emptyset$ describing a (almost) simple spine 
for $M$ and using it to  give an upper bound for the complexity of the manifold.

In \cite{FW} the authors construct a (almost) simple spine   for  orientable Seifert fibre spaces,  and therefore with $SE(M)=\emptyset$, 
obtaining an upper bound for the complexity. Let us recall their result.
For two coprime integers $p,q$ with $0<q<p$  denote by $S(p,q)$ 
the sum of the coefficients of the expansion of $p/q$ as a continued fraction: 
 
 $$ \textup{if}\quad \frac{p}{q}= a_1+\cfrac{1}{\ddots +\cfrac{1}{a_{k-1} +\cfrac{1}{a_k}}},\qquad \textup{then}
 \ S(p,q)=a_1+\cdots+a_k.$$
 
\begin{teo}[\upshape\cite{FW}] \label{eugeni_or}  Let $M$ be an orientable Seifert fibre space with non-empty boundary, having  E-fibres of types $(p_1,q_1),\ldots, (p_r,q_r)$ with
 $p_j>q_j>0$ for all $j=1,\ldots,r$. Then 
 $$c(M)\leq \sum_{j=1}^r \max\left\{S(p_j,q_j)-3,0\right\}.$$ 
\end{teo}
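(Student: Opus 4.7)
My plan is to realize $M$ as an assembly, in the sense of the skeleton framework recalled at the end of Section~1, of a single ``base block'' together with $r$ fibred solid tori, and to exhibit on each piece a skeleton whose true-vertex count sums to the right-hand side. Concretely, remove from $M$ a small fibre-saturated tubular neighborhood of each exceptional fibre, obtaining a regular $S^1$-bundle $M_0\to B^*$, and reattach the $r$ fibred solid tori $T(p_j,q_j)$. It will suffice to give a skeleton of $(M_0,\partial M)$ with zero true vertices meeting each new toric boundary component in a theta curve, and a skeleton of each $(T(p_j,q_j),\partial T(p_j,q_j))$ with $\max\{S(p_j,q_j)-3,0\}$ true vertices that meets its boundary torus in the matching theta curve. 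Assembling along the common theta curves then produces an almost simple spine of $M$ whose true-vertex count equals the claimed bound.

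\textbf{Base block skeleton.} Since $\partial M\neq\emptyset$ and $M$ is orientable, the base surface $B^*$ of $M_0$ has non-empty boundary and collapses to a trivalent graph $G\subset B^*$. The preimage $f^{-1}(G)\subset M_0$ is either $G\times S^1$ or, in the $\epsilon=n_2$ case, a twisted $S^1$-bundle over $G$; in both cases this is a simple 2-polyhedron whose singular 1-stratum is the union of the vertical circles over the trivalent vertices of $G$, and the link at every point of such a circle is a theta graph, never $K_4$. To promote it to a skeleton I add on each boundary torus $\partial_jM_0$ a non-trivial theta curve adapted to the vertical fibration and to the slope we will glue against. Since every added piece is simple, this base skeleton has zero true vertices.

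\textbf{Filling block skeletons.} For each $j$, I need a skeleton of the fibred solid torus $T(p_j,q_j)$ meeting $\partial T(p_j,q_j)$ in the prescribed theta curve. Using the continued fraction expansion $p_j/q_j=[a_1,\ldots,a_k]$ with $\sum_i a_i=S(p_j,q_j)$, one constructs a ``layered'' spine that interpolates between the boundary theta curve and the core of the solid torus by $S(p_j,q_j)$ elementary moves corresponding to mediant steps in the Farey tessellation. Each elementary move contributes one true vertex, but three of these layers can be absorbed into the initial theta curve and into a final collapse onto the core without producing true vertices; thus the net contribution is $S(p_j,q_j)-3$ when this is positive, and $0$ for the small cases (such as $(p,q)=(2,1),(3,1),(3,2)$) where the solid torus already admits a vertex-free skeleton.

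\textbf{Assembly and the main difficulty.} Gluing the base skeleton to the $r$ filling skeletons along the matching theta curves on the $r$ interface tori, using the assembly operation of Section~1, yields a sub-polyhedron $P\subset M$ which is almost simple, satisfies $M\searrow P\cup\partial M$, and has true-vertex count exactly $\sum_{j=1}^r\max\{S(p_j,q_j)-3,0\}$; this proves the bound. The real work is not in any one of the individual skeletons but in the \emph{simultaneous compatibility} of the choices: one must pick the trivalent graph $G$ and the three arcs of each interface theta curve so that, for every $j$, the resulting boundary pattern is exactly the one realized by a length-$S(p_j,q_j)$ layered spine of $T(p_j,q_j)$ minus three free layers, and so that the gluing homeomorphism is fibre-preserving and creates no additional singularity. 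The combinatorics of the Farey tessellation provides the required flexibility in choosing which three steps to absorb, so this compatibility can be arranged boundary-component by boundary-component, which is the heart of the argument.
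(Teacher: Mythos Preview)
Your overall strategy---remove neighborhoods of the exceptional fibres, put a vertex-free skeleton on the resulting $S^1$-bundle, and assemble with solid-torus skeletons contributing $\max\{S(p_j,q_j)-3,0\}$ vertices each---is exactly the approach the paper attributes to \cite{FW}. The substantive discrepancy is your interface pattern: you ask the base skeleton to meet each new boundary torus in a \emph{theta curve}, whereas in \cite{FW} (and in the paper's description of the ``Main block'' for the bordered case) the graph $\Gamma_0\subset B_0$ touches each such boundary circle in a single point, so that $P_0=f_0^{-1}(\Gamma_0)$ meets the corresponding torus in a single \emph{regular fibre}. The \cite{FW} skeleton of the solid torus is then built to meet its boundary in that same fibre, via a vertex-free ``transitional'' piece passing from the fibre to an internal theta graph before the layered part begins.

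This difference is not cosmetic: your base-block construction has a gap. If $G$ is an interior trivalent spine of $B^*$, then $f^{-1}(G)$ is disjoint from $\partial M_0$, and the instruction ``add on each boundary torus a non-trivial theta curve'' does not yet describe a $2$-polyhedron---you have not specified the $2$-cells connecting $f^{-1}(G)$ to the boundary, nor checked that the result remains simple with no new true vertices. Moreover, the ``simultaneous compatibility'' you flag as the heart of the argument is an artifact of the theta-curve choice. With the regular-fibre interface the fibre is canonical on each torus and the solid-torus skeleton meets it automatically, so the pieces assemble independently and no Farey-flexibility argument is needed. Theta-curve interfaces do become necessary in the \emph{closed} case (see the proof of Theorem~\ref{closed}), and there one indeed pays an extra vertex per solid torus precisely because the transitional block must be replaced by a flip.
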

This  theorem is proved  by finding  a spine of $M$: such a spine  is obtained  decomposing   $M$ into blocks  and then assembling  the  skeletons of the different blocks together.  In order to generalize the result to the case  $SE(M)\ne\emptyset$ we  adapt one of the blocks, the ``main'' one, in order to include $AE(M)$, and describe a new type of  block for the components of  $CE(M)$.\\

\textbf{Main block.} Let $M_0$  be a Seifert fibre space such that $\partial M_0\ne \emptyset$ and  $E(M_0)=CE(M_0)=\emptyset$ and 
let $f_0:M_0\to B_0$ be the projection map. Moreover, suppose that if $B_0$ is a disk, then the boundary of $M_0$ has at least two components (so there are at least two reflector arcs in the boundary of the disk). We take a decomposition of $\partial M_0$ into $\partial_+ M_0\cup \partial_- M_0$ such that $\partial_+M_0\ne \emptyset$ and  contains  all the   boundary components with  exceptional fibres.  Such a decomposition of $\partial M_0$ determines a decomposition $\partial B_0=\partial_+ B_0 \cup \partial_ -B_0$, where  $\partial B_0$ denotes the boundary of the  surface and not of the orbifold (so including  the reflector arcs). Note that $\partial_+ B_0\ne \emptyset$. Referring to Figure~\ref{fig_main}, let $\Gamma_0$ be a graph embedded in $B_0$ such that (i) each vertex has at most valence three and (ii) $B_0\setminus (\Gamma_0\cup \partial_ -B_0)\cong (\partial_+ B_0\cap\partial_{\mathcal O}B_0)\times [0,1)$, where $\partial_{\mathcal O}B_0$ denotes the boundary of the orbifold\footnote{Note that  $\partial_{\mathcal O}B_0$ does not contain singular points except for the endpoints of   the  reflector arcs.} $B_0$. By the previous assumptions it is easy to see that $\Gamma_0$ does not reduce to a single point.

\begin{figure}[h!]                      
\begin{center}                         
\includegraphics[width=10cm]{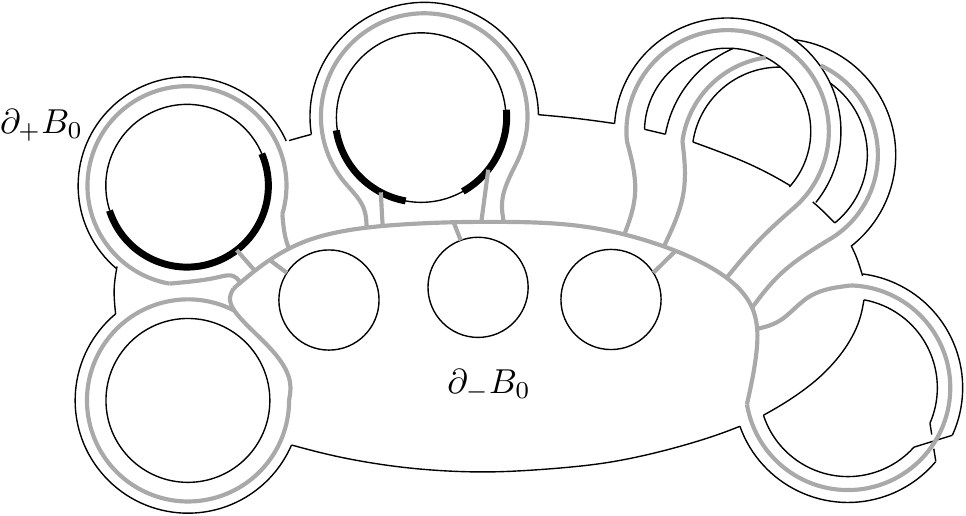}
\caption[legenda elenco figure]{The surface $B_0$ with the graph $\Gamma_0$ depicted in gray.}\label{fig_main}
\end{center}
\end{figure}

Let $P_0=f_0^{-1}(\Gamma_0)$. Since $\mathcal S\setminus\Gamma_0\cong \partial \mathcal S\times [0,1)$,  it follows that $M_0\setminus (P_0\cup\partial_- M_0)\cong\partial_+ M_0\times[0,1)$, and therefore $P_0$ is a skeleton for the \textit{main block} $(M_0,\partial_-M_0)$ without  true vertices. Note that,  since  $\Gamma_0$ intersects each  reflector arc in a single point,  $P_0$ intersects each component of  $AE(M)$ in an exceptional  fibre $\phi$ and $\overline{N}(\phi)\cap P_0$ is a M\"obius strip, where $\overline{N}(\phi)$ denotes a closed regular neighborhood of $\phi$  composed by fibres. Furthermore, $P_0\cap\partial_+M_0=\emptyset$ and $P_0$ intersects each component of $\partial_- M_0$ in a regular fibre. \\

\textbf{Exceptional block.} Let $M_E$ be either $N\times S^1$  or  $N\widetilde{\times} S^1$,  considered with the Seifert fibre space structures depicted in  Figure~\ref{example} $(a')$ and   $(b')$, respectively.  Denote by $f:M_E\to B_E$  the projection map. We represent  $N$ as in Figure~\ref{figanms} and $M_E$ as $(N\times I)/\sim$, where $\sim$ is an identification between $N\times \{0\}$ and $N\times\{1\}$ via the identity on $N$ if $M=N\times S^1$ and the composition of the reflection along the thick line with that  along the axis $\ell$ if $N\widetilde{\times} S^1$. Now let $N'\subset N$ be a closed regular neighborhood of the core of $N$ composed by fibres and disjoint from $\partial N$. Of course, $N'$ is a 
M\"obius strip and $N\setminus \textup{int}(N')\cong S^1\times I$. Referring to Figure~\ref{exc_block}, let $P_E\subset M_E$ be the polyhedron (depicted in gray) which is the union of:
\begin{itemize} 
\item the annulus $\alpha=(N\setminus \textup{int}(N'))\times \left\{\frac 14\right\}$;
\item the M\"obius strip $N'\times\{\frac 12\}$;
\item a band $\beta$ obtained by taking $\left(L\times I\right)/\sim$,  where $L\subset N'$ is the arc of the fixed points of the reflection along $\ell$, cutting it along $L\times \left\{\frac 12\right\}$ and pushing up (resp. pushing down) the part $L\times\{x\}$ with $x\ge \frac 12$ (resp. with $x\le \frac 12$) leaving fixed $L\times \{0\}\sim L\times\{1\}$, as shown in Figure~\ref{exc_block}. Observe that $\beta$ intersects transversally in an arc each $N'\times \{x\}$, with $x\neq \frac 12$, and intersect $N'\times \left\{\frac 12\right\}$ in two disjoint arcs;
\item the surface $\partial((N'\times I)/\sim)\setminus R$,   either a punctured torus or a punctured Klein bottle, where $R$ is the open dashed  2-cell depicted in Figure~\ref{fignewblock}. 
\end{itemize}
Note that $M_E\searrow ((N'\times I)/\sim)\cup \alpha$ and $((N'\times I)/\sim)\setminus P_E$ is a 3-ball. So, the polyhedron $P_E$ is a skeleton for the \textit{exceptional block} $(M_E,\emptyset)$  with only one true vertex (the 
thick point represented in both Figures ~\ref{exc_block}  and  \ref{fignewblock}). Moreover, $\partial_+M_E=\partial M_E$ and $P_E\cap \partial M_E$ is a regular fibre (i.e., $\alpha\cap \partial M_E$). 

\begin{figure}[h!]                      
\begin{center}                         
\includegraphics[width=7cm]{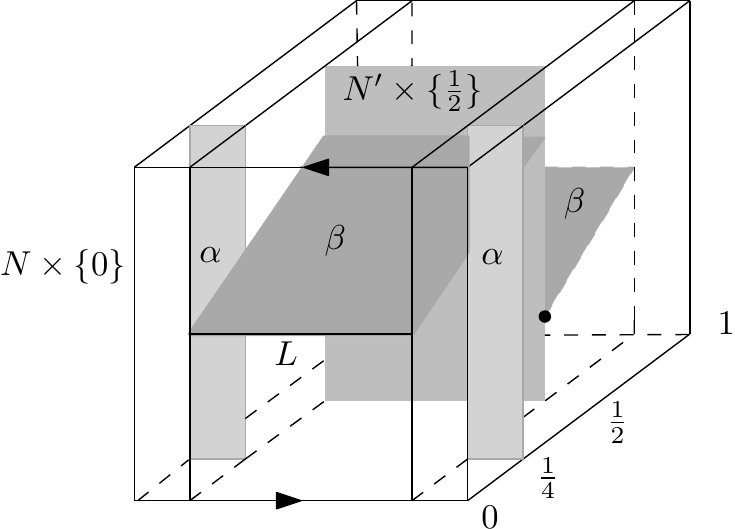}
\caption[legenda elenco figure]{The exceptional block $(M_E,\emptyset)$ and his skeleton $P_E$ (in gray).}\label{exc_block}
\end{center}
\end{figure}

\begin{figure}[h!]                      
\begin{center}                         
\includegraphics[width=2.5cm]{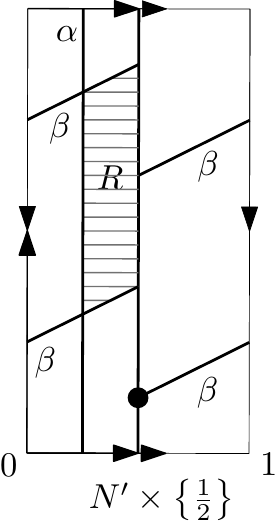}
\caption[legenda elenco figure]{The surface $\partial((N'\times I)/\sim)\setminus R$.}\label{fignewblock}
\end{center}
\end{figure}
We are ready to state our result on the complexity of bordered Seifert fibre spaces.

\begin{teo} \label{comp_bordo_reg} Let $M=\left\{b;\left(\epsilon,g,(t,k)\right);\left(h_1,\ldots,h_{m_+}\mid k_1,\ldots, k_{m_{-}}\right);
\left((p_1,q_1),\ldots,(p_r,q_r)\right)\right\}$  be a  Seifert fibre space    such that $\partial M\ne \emptyset$ (i.e., $m_++m_->0$). Then 
\begin{equation}
\label{bordo}
 c(M)\leq t +\sum_{j=1}^r \max\left\{S(p_j,q_j)-3,0\right\},
\end{equation}
where $S(p_j,q_j)$ denotes the sum of the coefficients of the expansion of $p_j/q_j$ as a continued fraction.

Moreover, if  $M=N\times S^1, N\widetilde{\times}S^1,D^2\times S^1, SK$ then   $c(M)=0$.
\end{teo}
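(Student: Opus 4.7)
The plan is to prove the bound by decomposing $M$ into standard blocks, constructing a skeleton for each block, and assembling them via the procedure described in Section~\ref{preliminari}. Specifically, I will decompose $M$ along fibre-preserving incompressible tori and Klein bottles into: (a) $r$ fibred solid tori $V_1,\ldots,V_r$, each a small fibred neighborhood of an isolated exceptional fibre of type $(p_j,q_j)$; (b) $t$ exceptional blocks $W_1,\ldots,W_t$, where $W_i$ is a fibred neighborhood of the $i$-th closed exceptional surface, hence fibre-preserving homeomorphic to $N\times S^1$ if the surface is a torus and to $N\widetilde{\times}S^1$ if it is a Klein bottle; (c) the main block $M_0$, a Seifert fibre space with $E(M_0)=CE(M_0)=\emptyset$ but possibly $AE(M_0)\neq\emptyset$, whose boundary consists of the remaining Klein bottle components of $\partial M$ together with the gluing tori/Klein bottles to the blocks in (a) and (b).

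For each block I will use the skeletons already described in the excerpt: the main block skeleton $P_0=f_0^{-1}(\Gamma_0)$ contributes zero true vertices; the exceptional block skeleton $P_E$ contributes exactly one true vertex per block (so $t$ vertices in total); and for each fibred solid torus $V_j$ I will use the skeleton constructed in \cite{FW} for Theorem~\ref{eugeni_or}, which contributes $\max\{S(p_j,q_j)-3,0\}$ true vertices and meets $\partial V_j$ in a non-trivial theta curve compatible with the Seifert fibration. The $\partial_\pm$ decompositions are chosen so that $\partial_-M_0$ consists of those boundary components of $M_0$ to be glued to the blocks in (a) and (b) (where $P_0$ already meets each component in a regular fibre by construction), while $\partial_+M_0$ consists of the Klein bottle components that remain in $\partial M$. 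For each $V_j$ and each $W_i$ we set the entire boundary to be $\partial_+$.

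The assembly step glues the blocks one at a time along the chosen gluing components, producing a skeleton $P$ of $(M,\emptyset)$ whose number of true vertices is the sum
$$0+\sum_{j=1}^r \max\{S(p_j,q_j)-3,0\}+t,$$
giving the claimed upper bound on $c(M)$. For the special cases $M=D^2\times S^1,SK,N\times S^1,N\widetilde{\times}S^1$, I will exhibit direct spines with no true vertices: the core circle of the solid torus, a M\"obius strip for $SK$, and the two natural sub-polyhedra $N\cup(\mathrm{fibre})$ or similar for the $N$-bundles; in each case the formula reduces to $0+0=0$ anyway, so no extra argument is needed beyond noting that the spine is almost simple with empty true vertex set.

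The main obstacle is ensuring compatibility of the skeletons on the matching boundary components, i.e.\ that $(C_1,P_1\cap C_1)\cong(C_2,P_2\cap C_2)$ under a fibre-preserving homeomorphism. On the $V_j$ side the intersection is a theta curve consisting of a section-arc and a meridional arc meeting a regular fibre, while on the $M_0$ side $P_0\cap\partial_-M_0$ is a single regular fibre; to reconcile this one chooses the graph $\Gamma_0$ in $B_0$ so that near each glued boundary component it has a small ``lollipop'' attached to $\partial_-B_0$, producing a theta-type intersection on the corresponding toric/Klein bottle component of $\partial_-M_0$ without introducing any true vertex in $P_0$ (the added pieces are annular and hence carry no triple lines in the interior). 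A parallel adjustment is needed for the exceptional blocks, where a single regular fibre on $\partial W_i$ suffices once the corresponding intersection in $\partial_-M_0$ is also arranged to be a regular fibre; this is the generic case of the main block when no solid torus is attached. A second minor point is the degeneracy of the main block when $B_0$ is a disk: this is excluded by the standing hypothesis that $\partial M_0\neq\emptyset$ has at least two components in that case, and whenever the decomposition produces such a degenerate main block the manifold must be one of the four listed special cases, handled separately.
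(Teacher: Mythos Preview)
Your overall strategy matches the paper's proof exactly: decompose $M$ into a main block $M_0$, $t$ exceptional blocks, and $r$ fibred solid tori; equip each with the skeleton described in the text (contributing $0$, $1$, and $\max\{S(p_j,q_j)-3,0\}$ true vertices respectively); then assemble.

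There is, however, a genuine confusion in your compatibility discussion. In the bordered setting of \cite{FW}, the solid--torus skeleton meets $\partial V_j$ in a \emph{regular fibre}, not a theta curve: the construction in \cite{FW} includes a ``transitional block'' (with no true vertices) that passes from the internal theta graphs to a single fibre on the outer boundary. This is precisely why the paper, in the closed case, must \emph{replace} that transitional block by flip blocks to obtain a theta curve. So in the present bordered case both sides of every gluing --- main block, exceptional block, solid torus --- already carry a single regular fibre, and the assembly is immediate with no modification of $\Gamma_0$ required. Your ``lollipop'' construction is therefore unnecessary, and in fact it would not produce a theta curve even if one were needed: since $P_0=f_0^{-1}(\Gamma_0)$ is a union of fibres, its trace on any boundary torus is a union of parallel fibres, never a theta graph (which requires a section arc). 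The paper only introduces a section when building the main block for the \emph{closed} case.

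Two minor points. First, $\partial_+M_0=\partial M$ may contain tori as well as Klein bottles (whenever some $h_i=0$), so your description of $\partial_+M_0$ is too restrictive. Second, the inequality~\eqref{bordo} does \emph{not} always reduce to $0$ for the four listed manifolds: for instance $N\times S^1=\{0;(o_1,0,(1,0));(0\mid\ );\ \}$ has $t=1$, giving bound $1$. The paper therefore treats these four cases first with explicit spines (the exceptional torus or Klein bottle for the $N$--bundles, a circle for $D^2\times S^1$ and $SK$), and only afterwards runs the block decomposition for the remaining manifolds, where the main--block hypothesis on $B_0$ is then automatically satisfied.
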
 

 \begin{proof}
We start by proving the last part of the statement. Referring to   Example~\ref{es_seifert_combi}, we have  $N\times S^1= \{0;(o_1,0,(1,0));(0\mid\ );\ \}$,  $N\widetilde{\times}S^1=\{0;(o,0,(1,1));(\ \mid0);\ \}$, $D^2\times S^1=\{0;(o_1,0,(0,0);(0\mid\ );(p,q)\}=\{0;(o_1,0,(0,0);(0\mid\ );\}$, $SK=\{0;(o_1,0,(0,0);(1\mid\ );\ \}$. A spine $S$ of the first two manifolds is the exceptional set (respectively,  a torus  and a Klein bottle), while a spine of the last ones is a circle (i.e., the axis of  the solid torus for $D^2\times S^1$  or any exceptional fibre   for $SK$). Indeed, in all cases $M\setminus S\cong\partial M\times[0,1)$ and so, since $S$ has no true vertices,  all these manifolds have complexity zero.
 
From now on, let $M$ be a Seifert fibre space different from the previous ones. Let $f:M\to B$  be the projection map and let $\partial_+B$ be the union of the boundary components of the underlying surface not corresponding to reflector circles. Denote by $B_0$ the surface obtained from $B$  by removing  disjoint open disks around each cone point and disjoint open collars around each reflector circle, clearly $B_0\subset B$ and $\partial_+ B\subset B_0$. Let $\partial_+B_0=\partial_+ B$, $\partial_-B_0=\partial B_0\setminus \partial_+B_0$ and $(M_0,\partial_- M_0)=(f^{-1}(B_0),f^{-1}(\partial_-B_0))$, therefore $\partial_+M_0=\partial M_0\setminus\partial_-M_0=\partial M$.
Since $\partial M\neq\emptyset$ and $M$ is neither $D^2\times S^1$ nor $SK$, then $(M_0,\partial_- M_0)$ is a main block. Moreover, $M\setminus \textup{int} (M_0)$ is the disjoint union of $t$  exceptional blocks $M_{E_i}$ (one for each component of $CE(M)$)  and $r$ fibred solid tori $T_1,\ldots, T_r$. Take the skeleton $P_0$ (without true vertices) for $M_0$ and the skeleton $P_{E_i}$ (with one true vertex) for each exceptional block. For $T_j$ we take the skeleton described 
in \cite{FW}, having $\max\left\{S(p_j,q_j)-3,0\right\}$ true vertices, for $j=1,\ldots,r$. By assembling $P_{E_i}$ with $P_0$ via the identity 
and the skeleton of $T_j$ with $P_0$ as described in \cite{FW}, we obtain the required result.
\end{proof}

Next corollary characterizes a wide class of Seifert fibre spaces having complexity zero. 
\begin{cor}
Let $M$ be a Seifert fibre space with $\partial M\ne\emptyset$,  and such that
\begin{itemize} 
\item[\textup{i)}] $SE(M)=AE(M)$ (i.e., $t=0$),
\item[\textup{ii)}] the $E$-fibres of $M$, if any, are of type  $(2,1)$, $(3,1)$  and $(3,2)$,
\end{itemize}
then $c(M)=0$.
\end{cor}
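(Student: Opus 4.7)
The plan is to apply Theorem \ref{comp_bordo_reg} directly, since with $t=0$ and constraints on the exceptional fibre types, the upper bound it furnishes collapses to zero.

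First I would observe that the hypothesis $SE(M)=AE(M)$ translates exactly into $t=0$ in the combinatorial description, so the formula in Theorem \ref{comp_bordo_reg} becomes
\[
 c(M)\leq \sum_{j=1}^r \max\bigl\{S(p_j,q_j)-3,0\bigr\}.
\]
Hence it suffices to verify that $S(p_j,q_j)\leq 3$ for each admissible pair $(p_j,q_j)\in\{(2,1),(3,1),(3,2)\}$.

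Next I would carry out the three routine continued-fraction calculations: $2/1=2$ gives $S(2,1)=2$; $3/1=3$ gives $S(3,1)=3$; and $3/2=1+1/2$ gives $S(3,2)=1+2=3$. Therefore in every case $\max\{S(p_j,q_j)-3,0\}=0$, and summing over $j=1,\ldots,r$ yields $c(M)\leq 0$. Since complexity is non-negative, we conclude $c(M)=0$.

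There is essentially no obstacle: the corollary is a numerical specialization of the preceding theorem. The only small point worth mentioning is that Theorem \ref{comp_bordo_reg} also treats separately the manifolds $D^2\times S^1$, $SK$, $N\times S^1$ and $N\widetilde{\times}S^1$; these fall under the hypotheses of the corollary (each has $t=0$ and no $E$-fibres, or only an $E$-fibre of type $(2,1)$ in the case of $N\widetilde\times S^1$ as fibred in Figure~\ref{example}~$(b)$), so they are already covered by the argument above.
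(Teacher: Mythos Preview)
Your proof is correct and follows exactly the paper's approach: apply the bound of Theorem~\ref{comp_bordo_reg} with $t=0$ and observe that $S(p_j,q_j)\le 3$ for each of the three admissible pairs, so the right-hand side vanishes. The paper's own proof is the same one-line argument; your additional remark about the four special manifolds is harmless but unnecessary, since the general inequality~(\ref{bordo}) already applies to them.
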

\begin{proof}
From the above conditions we have  $S(p_j,q_j)\leq 3$. So the statement follows directly 
from (\ref{bordo}).
\end{proof}
\end{subsection}

\begin{subsection}{The closed case}
Now we deal with the case $\partial M = \emptyset$. 
In the orientable case many results are already known: the complexity of $S^3$ is zero and in \cite[p.77]{M2} the following  upper bound for lens space complexity is given
\begin{equation}
c(L(p,q))\leq \max\{S(p,q)-3,0\},
\end{equation}
which has been proved to be sharp in many cases (see \cite{JRT,JRT2}).   Efficient upper bounds for all closed orientable Seifert fibre spaces have been obtained in \cite{MP2}, and in the following we will extend the main result of that paper to the non-orientable case.

As in the bordered case we  construct  a spine of $M$ by  assembling together the  skeletons of the different blocks in which $M$ is decomposed. Since the manifold is closed, we need to construct a skeleton for the space \linebreak $M_0=M \setminus N\left( E(M)\cup SE(M)\right)$\footnote{The regular neighborhood of $E(M)\cup SE(M)$ is supposed to be a union of fibres of $M$.} whose complement is a 3-ball, so we will need to  add a section of $f_{|_{M_0}}:M_0\to f(M_0)$ to the skeleton of the main block described in the case with non-empty boundary. \\

\textbf{Main block.} Let $M_0=\left\{0;\left(\epsilon,g,(0,0)\right);\left(0,\ldots,0 \mid 0,\ldots,0 \right);
\right\}$ be a Seifert fibre space without exceptional fibres and  let $f_0:M_0\to B_0$ be the projection map. Denote by $s=m_++m_-$  the number of boundary components of both $B_0$ and $M_0$. Set $\partial_- M_0 =\partial M_0$ and $\partial_+ M_0=\emptyset$.
Suppose that $B_0$ is neither a sphere nor a disk and denote by $\chi$ the Euler characteristic of the closed surface $B$  obtained from $B_0$ by capping off by disks all its boundary components (i.e.,   $\chi=2-2g$ if $\epsilon=o,o_1,o_2$ and $\chi=2-g$ if $\epsilon=n,n_1,n_2,n_3,n_4$). As a consequence, if $\chi=2$ then $s\ge 2$.

Let $D$ be a closed disk embedded in $\textup{int}(B_0)$ and let $A$ be the union of the disjoint arcs properly embedded in $B_0\setminus\textup{int}(D)$ described in Remark~\ref{costruzione} (depicted by thick lines in  Figure~\ref{fig_main3}). 
Then $A$ is non-empty and it is composed by $2-\chi$ edges with both endpoints in $\partial D$ and $s$ edges with an endpoint in $\partial D$ and the other in a component of $\partial B_0$. By construction $B_0\setminus\left(A\cup\partial B_0\cup D\right)$ is homeomorphic to an open disk, and therefore  $B_0\setminus\left(A\cup\partial B_0\cup \partial D\right)$ is the disjoint union of two open disks. Note that the number of points of $A$ belonging to $\partial D$ is at least two.

\begin{figure}[h!]                      
\begin{center}                         
\includegraphics[width=5.5cm]{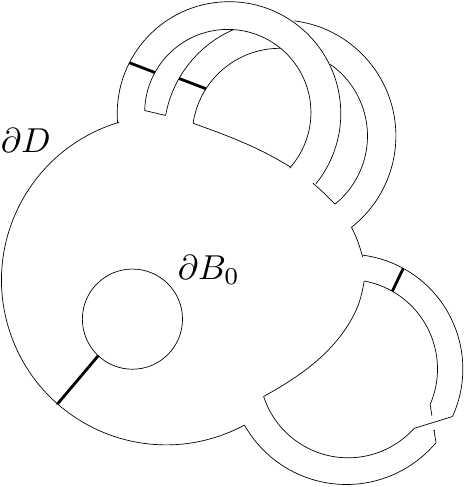}
\caption[legenda elenco figure]{The set $A\subset B_0\setminus\textup{int}(D)$.}\label{fig_main3}
\end{center}
\end{figure}

If  $s_0: B_0\to M_0$   is a section of $f_0$, then  $P''_0=s_0(B_0)\cup f_0^{-1}(A)\cup f_0^{-1}(\partial D)$ is a non-simple  polyhedron  since $\textup{int}(s_0(A))$ is a collection of quadruple lines in the  polyhedron (the link of each point is homeomorphic to a graph with two vertices and four edges connecting them), and a similar phenomenon occurs for $s_0(\partial D\setminus A)$.
In order to make the polyhedron $P''_0$ simple we perform  ``small'' shifts by moving in parallel the disk $s_0(D)$ along  the fibration and the components of $f_0^{-1}(A)$ as depicted  in Figure~\ref{freedom}. As shown by the pictures, the shift of any component of  $f_0^{-1}(A)$ may be performed in two different ways that, as we will see, are not usually equivalent in term of complexity of the final spine. On the contrary, the two possible parallel shifts for $s_0(D)$ are equivalent as it is evident from Figure~\ref{alette}, which represents the torus $T=f_0^{-1}(\partial D)$. It is convenient to think the shifts of $f_0^{-1}(A)$ as performed on the components of $A$.

\begin{figure}[h!]                      
\begin{center}                         
\includegraphics[width=13.5cm]{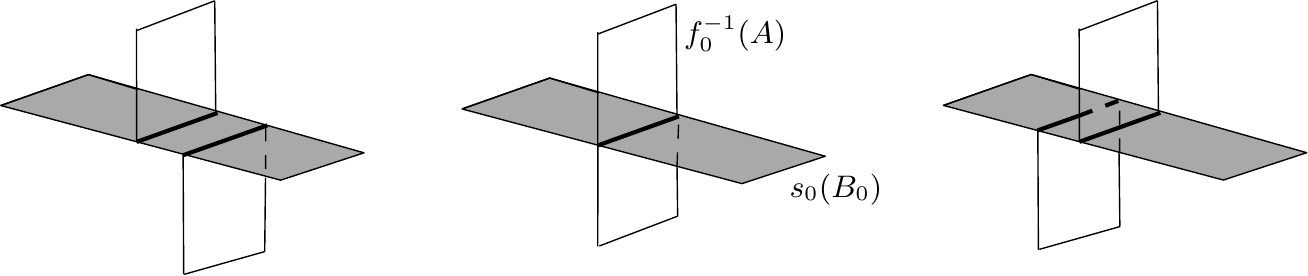}
\caption[legenda elenco figure]{The two possible shifts on a component of $f_0^{-1}(A)$.}\label{freedom}
\end{center}
\end{figure}

Let $P'_0=s_0(B_0\setminus\textup{int}(D))\cup D'\cup W \cup f_0^{-1}(\partial D)$ be the polyhedron obtained from $P''_0$ after the shifts, where $D'$ and $W$ are the results of the shifts of  $s_0(D)$ and  $f_0^{-1}(A)$, respectively. It is easy to see that $P'_0\cup\partial M_0$ is simple and $P'_0$ intersects each component of $\partial M_0$ in a non-trivial theta-curve. Moreover, $P'_0$ has 3 true vertices for each point of $A\cap\partial D$, so it has exactly $12-6\chi+3s$ true vertices.  Since  $M_0\setminus\left( P'_0\cup\partial_- M_0\right)$ is the disjoint union of two open balls, in order to obtain a skeleton $P_0$  for   the \textit{main block} $(M_0,\partial_-M_0)$ it is enough to remove a suitable open 2-cell from the torus $T=f_0^{-1}(\partial D)\subset P'_0$,  connecting in this way the two balls. Since  $\Gamma=T \cap  (s_0(B_0\setminus\textup{int}(D))\cup D'\cup A)$ is a  graph cellularly embedded in $T$ whose vertices are true vertices of $P'_0$, we will remove the region $R$ of $T\setminus \Gamma$ having in the boundary the highest number of vertices of $\Gamma$.

Referring to Figure \ref{alette},  the graph $\Gamma$ is composed by two horizontal parallel loops  $d=\partial (s_{0}(D))$ and $d'=\partial D'$, and an arc with both  endpoints on  $d$ for each boundary point of $A$ belonging to $\partial D$.  Reversing the shift of a component of $A$ performs a symmetry along $d$ of the correspondent arc(s). Clearly, if the shift is performed on a component of $A$ which is the cocore of a handle, both   arcs corresponding to the endpoints change as just described.  Moreover, if the core of an orientable (resp. non-orientable) handle is  sent by $\omega$  to $1$ then the corresponding two arcs  (which are not necessarily  consecutive in $\Gamma$, as suggested by the dots in the pictures) are as  in picture (a) (resp. (b)), or in the mirrored ones with respect to $d$. On the contrary, if  the   core is sent to  $-1$  then the rightmost arc in each picture should be symmetrized with respect to the loop $d$.

\begin{figure}[h!]                      
\begin{center}                         
\includegraphics[width=12cm]{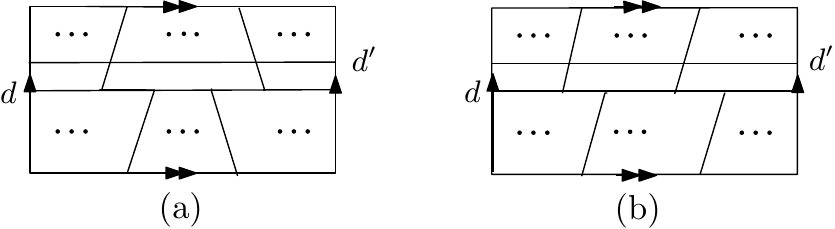}
\caption[legenda elenco figure]{A fragment of the graph $\Gamma$ embedded in $f_0^{-1}(\partial D)$.}\label{alette}
\end{center}
\end{figure}

A region of $T\setminus \Gamma$ has 5  vertices when the arcs belonging to its boundary are parallel and either 4 or 6 vertices otherwise. Since all regions has 5 vertices if and only if all arcs are parallel,  the shifts of the elements of $A$ can be chosen in such a way that there exists a region with 6 vertices in all cases except when $\chi=1$, $\epsilon=n_1$ and $s=0$. This exceptional case is the one depicted in Figure~\ref{alette} (b) without   dots: in that case all regions have 5 vertices. By removing  such a region  from $P'_0$ we obtain a polyhedron $P_0$ for the main block $(M_0,\partial_- M_0)$ with  $6(1-\chi)+3s$ true vertices, while in the special case $\chi=1$, $\epsilon=n_1$ and $s=0$, the polyhedron has exactly one true vertex.
We remark that changing the shift of a component of $A$ intersecting $\partial B_0$  changes the intersection between the corresponding element of $f^{-1}(A)$ and $\partial_-(M_0)$ (which is a non-trivial theta curve) by a flip move (see bottom and top face of the block of Figure \ref{flip}).

It  is important to point out that when $s=0$ the polyhedron $P_0$ is a simple spine for $M_0$.   \\

\begin{figure}[h!]                      
\begin{center}                         
\includegraphics[width=4cm]{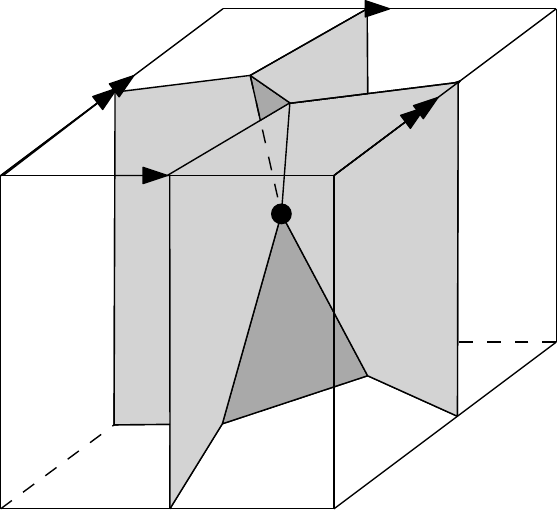}
\caption[legenda elenco figure]{A flip block connecting two theta graphs.}\label{flip}
\end{center}
\end{figure}

\textbf{Exceptional block.} 
Let $M_E$  be either  $N\times S^1$  or  $N\widetilde{\times} S^1$  considered with the Seifert fibre space structures depicted in  Figure~\ref{example} $(a')$ and   $(b')$, respectively, and  denote by $f:M_E\to B_E$  the projection map. 
Consider the skeleton $P_E$ of the exceptional block  $(M_E,\emptyset)$ constructed in the bordered case (see Figure~\ref{exc_block}). In that case $P_E\cap\partial M_E$ is a regular fibre, while in order to make the assembling with the main block the intersection should be a theta graph. Therefore, referring to Figure~\ref{exc_block2}, we modify $P_E$  as follows:
\begin{itemize}
 \item  add an annulus $\gamma$ which is a section of the restriction of $f$ to the space $M_E\setminus \textup{int}\left(\left(N'\times I\right)/\sim\right)$;
 \item  modify the annulus $\alpha$ by\footnote{We perform this change in order to remove the quadruple line $\alpha\cap\gamma$.}  pushing (i) the bottom part of the right strip toward $N\times\{0\}$ and the upper part of the left strip toward $N\times \{1\}$;
  \item take the surface  $\partial((N'\times I)/\sim)\setminus R$, where the 2-cell $R$ is the  dashed region of the left picture of  Figure~\ref{fignewblock2}.  
\end{itemize}

If $P_E$ still denote the resulting skeleton, then  $M_E\searrow \left((N'\times I)/\sim\right)\cup \alpha\cup\gamma$ and $((N'\times I)/\sim)\setminus P_E$ is a 3-ball. Therefore the polyhedron $P_E$ is a skeleton with 3 true vertices (the thick points represented both in Figures~\ref{exc_block2}  and \ref{fignewblock2}) for the \textit{exceptional block} $(M_E,\emptyset)$. Note that if we modify $\alpha$ in the opposite way, namely
pushing (i) the upper part of the right strip toward $N\times\{0\}$ and the bottom part of the left strip toward $N\times \{1\}$ the theta graph on $P_E\cap \partial M_E$ changes by a flip. Anyway,  we can still find a skeleton with $3$ true vertices (see the right part of Figure~\ref{fignewblock2}). 

\begin{figure}[h!]                      
\begin{center}                         
\includegraphics[width=7cm]{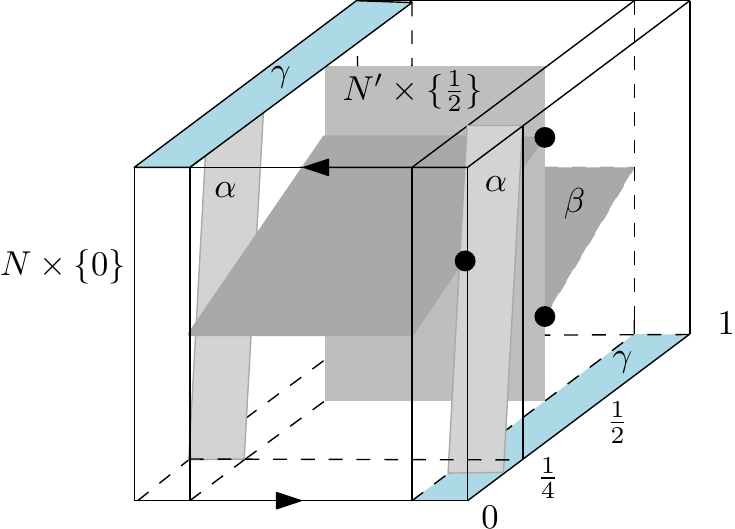}
\caption[legenda elenco figure]{The exceptional skeleton $P_E$ (in gray) for the block $(M_E,\emptyset)$.}\label{exc_block2}
\end{center}
\end{figure}

\begin{figure}[h!]                      
\begin{center}                         
\includegraphics[width=6.5cm]{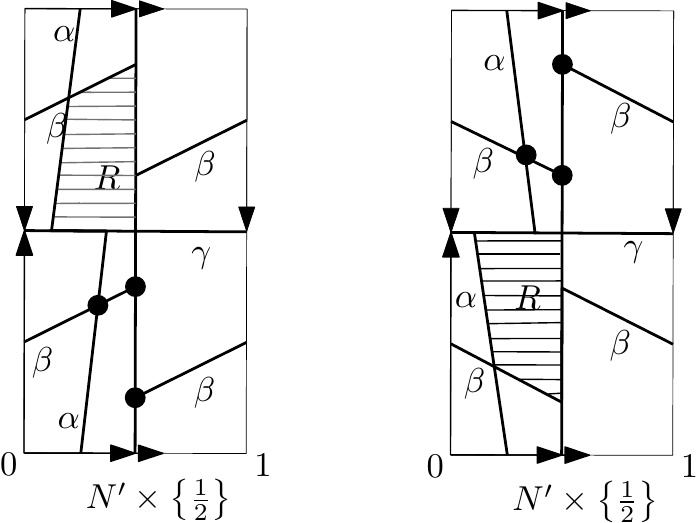}
\caption[legenda elenco figure]{The surface $\partial((N'\times I)/\sim)\setminus R$, corresponding  to the two different choices for $P_E$.}\label{fignewblock2}
\end{center}
\end{figure}

Now we are ready to state our result on the complexity of closed Seifert fibre spaces.

\begin{teo}\label{closed} Let $M=\left\{b;\left(\epsilon,g,(t,k)\right);\left(\ \mid \ \right);
\left((p_1,q_1),\ldots,(p_r,q_r)\right)\right\}$  be a closed  Seifert fibre space with $b\ge -r/2$, and let $\chi = 2-2g$ if $\epsilon=o,o_1,o_2$ and $\chi=2-g$ if $\epsilon=n,n_1,n_2,n_3,n_4$.
\begin{enumerate}
 \item If $\chi=2$ and $r=t=0$, then $c(M)\leq \max\{b-3,0\}$;
 \item if $\chi=2$, $t=0$, $r=1$ and $b>0$, then   $c(M)\leq \max\{b+S(p_1,q_1)-3,0\}$;
 \item if $\chi=2$, $t=0$, $r=1$ and $b=0$, then   $c(M)\leq \max\{S(p_1,q_1)-3-\lfloor p_1/q_1\rfloor,0\}$, where $\lfloor \cdot \rfloor$ denotes the integer part function;
 \item if $\chi=1$, $\epsilon=n_1$, $r=t=0$ and $b=0$, then   $c(M)\leq 1$;
  \item if $\chi=1$, $\epsilon=n_1$, $r=t=0$ and $b=1$, then   $c(M)=0$;
  \item in all  other cases:
  \begin{equation}\label{petronio}
c(M)\leq \max\{b-1+\chi,0\}+6(1-\chi)+\sum_{j=1}^r \left(S(p_j,q_j)+1\right), 
\end{equation} 
if $M$ is orientable\footnote{Formula (\ref{petronio}) has been introduced in \cite{MP2}. Here we give a new and more direct proof of it.};
\begin{equation}\label{sharm}
c(M)\leq 6(1-\chi)+6t+\sum_{j=1}^r \left(S(p_j,q_j)+1\right), 
\end{equation}
if $M$ is non-orientable.
\end{enumerate}
\end{teo}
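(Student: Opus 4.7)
The plan is to construct a spine of $M\setminus\textup{int}(B^3)$ by assembling the main-block and exceptional-block skeletons introduced in the two paragraphs above (together with the solid-torus skeletons of \cite{FW}), and then to bound the complexity by counting true vertices. First I would dispatch the five exceptional cases. Cases 1--3 concern lens spaces (base orbifold $S^2$ with at most one cone point), so the bounds follow by absorbing the Seifert invariant $b$ into the lens-space parameters and applying the classical estimate $c(L(p,q))\le\max\{S(p,q)-3,0\}$ of \cite{M2}; in case 3 the term $\lfloor p_1/q_1\rfloor$ appears because the first entry of the continued fraction of $p_1/q_1$ can be absorbed into the standard disk of the genus zero base. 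Cases 4 and 5 correspond to $S^1$-bundles over $\mathbb{RP}^2$, where $M_0=M$ and the main-block construction with $\chi=1$, $\epsilon=n_1$, $s=0$ directly yields a spine with one true vertex (case 4) or, after a further collapse enabled by $b=1$, none (case 5).

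For case 6 I would decompose
\[
M=M_0\cup T_1\cup\cdots\cup T_r\cup M_{E_1}\cup\cdots\cup M_{E_t}
\]
by cutting along the boundaries of fibred regular neighborhoods of $E(M)\cup CE(M)$. Here $M_0$ is an $S^1$-bundle over the surface $B_0$ of genus $g$ with $s=r+t$ boundary components (classified by Theorem \ref{bundle}), each $T_j$ is the fibred solid torus associated to the $E$-fibre of type $(p_j,q_j)$, and each $M_{E_i}$ is either $N\times S^1$ or $N\widetilde{\times}S^1$. The pair $(M_0,\partial M_0)$ is a main block, so it carries the skeleton $P_0$ with $6(1-\chi)+3s$ true vertices meeting each boundary component in a non-trivial theta-graph; each $M_{E_i}$ carries the exceptional-block skeleton $P_{E_i}$ with $3$ true vertices; and each $T_j$ carries the skeleton of \cite{FW} with $\max\{S(p_j,q_j)-3,0\}$ true vertices. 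In the orientable subcase (so $t=0$), the Seifert number $b$ is encoded by an auxiliary fibred solid-torus skeleton contributing at most $\max\{b-1+\chi,0\}$ further vertices.

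The final step is to assemble these skeletons along the boundary theta-graphs via fibre-preserving homeomorphisms. When two theta-graphs do not match under the prescribed gluing, I would interpose a flip block (Figure \ref{flip}), adding $3$ true vertices; the shift freedom in the main-block construction (Figure \ref{freedom}) and the two admissible choices of exceptional-block skeleton (Figure \ref{fignewblock2}) allow me to preselect the boundary theta-graphs on $P_0$ and each $P_{E_i}$ so that the number of flip blocks actually needed is as small as possible. Careful bookkeeping of the vertex counts (main block, plus $t$ exceptional blocks, plus $r$ solid tori, plus the unavoidable flip blocks) then produces exactly the right-hand sides of (\ref{petronio}) and (\ref{sharm}). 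The main obstacle will be precisely this optimization: showing that the shifts on $\partial D$ and on the boundaries of the $M_{E_i}$ can be jointly chosen so that each flip block contributes at most the extra vertices allowed by the formulas, and that the delicate configuration $\chi=1$, $\epsilon=n_1$, $s=0$ (in which every region of $f_0^{-1}(\partial D)\setminus\Gamma$ has exactly five vertices, so no $6$-vertex region is available to remove) is the only one forcing the separate treatment of cases 4 and 5.
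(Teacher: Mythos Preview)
Your global plan coincides with the paper's: decompose $M$ into a main block, $t$ exceptional blocks and $r$ fibred solid tori, equip each with the skeleton described just before the theorem, assemble, and count true vertices. Two points, however, are not right and would prevent the bookkeeping from closing.

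\textbf{Flip blocks cost one vertex, not three.} A flip block (Figure~\ref{flip}) connects two non-trivial theta curves differing by a single flip and has exactly one true vertex. The paper's accounting for each solid torus $T_j$ is: start from the \cite{FW} skeleton with $S(p_j,q_j)-3$ true vertices, remove the transitional block (circle-to-theta, no vertices) and replace it by \emph{one} flip block when the shift of the corresponding arc of $A$ is regular, \emph{two} when it is singular. Thus $T_j$ contributes $S(p_j,q_j)-2$ (regular) or $S(p_j,q_j)-1$ (singular), and the heart of the optimisation is showing that all shifts can be chosen regular when $t>0$ or $\chi<2$, and all but one when $\chi=2$, $t=0$. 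With your figure of $3$ per flip block the total overshoots (\ref{sharm}) by~$2r$.

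\textbf{The parameter $b$ is not handled by an extra solid-torus block.} Adding a further boundary component to $B_0$ and filling it with a $(1,b)$-torus is the naive idea, but it raises $s$ by one (hence adds $3$ main-block vertices) and there is no \cite{FW}-type skeleton of a trivially fibred solid torus meeting its boundary in the right theta curve with only $\max\{b-1+\chi,0\}$ vertices. The paper instead constructs first the spine $S'$ for the manifold $M'$ with the same invariants and $b=0$, and then realises the passage $M'\rightsquigarrow M$ by drilling out $|b|$ regular-fibre neighbourhoods $\Phi_i$ whose images lie on arcs of $A$, regluing each with a $(1,\operatorname{sign}(b))$ meridian disk, and deleting a maximal $2$-cell from each $\partial\Phi_i$. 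The crucial (and somewhat delicate) step is choosing, for each $\Phi_i$, the shifts on the two new sub-arcs $\delta_i',\delta_i''$ so that (i) the number of true vertices of the main block does not increase and (ii) the removed cell on $\partial\Phi_i$ has all the new vertices on its boundary, except for the $\max\{b-1+\chi,0\}$ unavoidable ones when $b>0$. This requires separate arguments for $b=1$, $b=-1$, $b>1$, $b<-1$, and for $\chi=2$ versus $\chi<2$; without them the term $\max\{b-1+\chi,0\}$ in~(\ref{petronio}) is not justified.
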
 
 \begin{proof}
\begin{enumerate}
 \item In this case $M=L(b,1)$ (see \cite{O}), so the result follows from (2). 
 \item In this case $M=L(bp_1+q_1,p_1)$ (see \cite{O}), so the result follows from (2). 
 \item In this case $M=L(q_1,p_1)$ (see \cite{O}), so the result follows from (2) since $M=L(q_1,r_1)$, where $r_1\equiv p_1 \textrm{ mod } q_1$ with $0<r_1<q_1$, and $S(q_1,r_1)=S(p_1,q_1)-\lfloor p_1/q_1\rfloor$. 
 \item In this case $M=\mathbb{RP}^2\times S^1$ and a spine for $M$ is the polyhedron $P_0$ of the main block, which in this case has exactly one true vertex. 
 \item In this case $M=S^2\widetilde\times S^1$ (see \cite{O}). Let  $S^2\widetilde\times S^1=(S^2\times I)/\sim$, where $(x,0)\sim (a(x),1)$ being $a$ the antipodal map of $S^2$, then an almost spine for $M$ is $((S^2\times\{0\})\cup(\{P\}\times I))/\sim$, which is homeomorphic to a 2-sphere with a diameter, and therefore has no true vertices.\\
\end{enumerate} 

Now  we turn to the proof of formulae~(\ref{petronio}) and (\ref{sharm}). 

If $\chi=2$, $t=1$ and $r=0$, then the base space is a disk whose boundary is a reflector circle. A  simple spine for $M$\footnote{It is easy to see that in this case  $M=S^2\widetilde\times S^1$.}  is the union of the exceptional set (which is a torus  $T$) and a section of the fibration (which is a disk $D$), being $T\cap D=\partial D$ a non-trivial simple closed curve on $T$. Of course the spine has no true vertices and therefore $c(M)=0$, which proves (\ref{sharm}).

From now on, let $M$ be a Seifert fibre space  different from the previous ones. 

First suppose $b=0$. Let $f:M\to B$ be the projection map and denote by $B_0\subset B$ the surface obtained from $B$  by removing  disjoint open disks around each cone point and disjoint open collars around each reflector circle. Let $\partial_+B_0=\emptyset$, $\partial_-B_0=\partial B_0$ and $(M_0,\partial_- M_0)=(f^{-1}(B_0),f^{-1}(\partial_-B_0))$, therefore $\partial_+M_0=\emptyset$.
The block  $(M_0,\partial_- M_0)$ is a main block with $s=t+r$  boundary components and $M\setminus \textup{int} (M_0)$ is the disjoint union of $t$  exceptional blocks $M_{E_i}$ (one for each component of $CE(M)$) and $r$ fibred solid tori $T_1,\ldots, T_r$ (one for each isolated exceptional fibre).

For $M_0$ we take a skeleton $P_0$ as previously described, where the choice of the shifts depends on the following.
The  skeleton for $T_j$ described in \cite{FW} and  having $S(p_j,q_j)-3$ true vertices, have to be modified since in the closed case $P_0\cap T_j$ is a theta curve, and  no longer a simple closed curve. So we replace a transitional block (having no vertices) connecting a regular fibre with a theta graph denoted by $\theta'$ (according to the notation of \cite{FW}),  with either one or two flip blocks (see Figure \ref{flip}), each having one true vertex, connecting  the theta graph $P_0\cap T_j$  to $\theta'$. The number of the additional flip blocks depends on the shift of the corresponding component $\delta$ of $A$ used in the construction of the main block. We call the shift of $\delta$ {\it regular} when a single flip is sufficient and {\it singular} when two flips are required  (see Figure \ref{A}, where the shifted arcs are denoted by dotted lines). Since we want to have a skeleton $P_0$ for $M_0$ with $6(1-\chi)+3t+3r$  true vertices,  all flips can be chosen regular if either $t>0$  or $\chi<2$ and  all flips except one can be chosen regular otherwise (see Figure \ref{B}).

For $M_{E_i}$  we take a skeleton $P_{E_i}$ such that the theta graph $P_{E_i}\cap M_{E_i}$ coincides with the theta graph on the corresponding component of $M_0\cap P_0$ for $i=1,\ldots ,t$.  The skeleton has always 3 true vertices, since the  choice of the shift on the corresponding component of $A$ does not affect the number of its true vertices (see Figure \ref{fignewblock2}).

\begin{figure}[h!]                      
\begin{center}                         
\includegraphics[width=5cm]{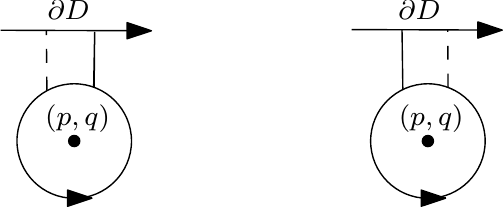}
\caption[legenda elenco figure]{Regular shift (on the left) and singular shift (on the right).}\label{A}
\end{center}
\end{figure}

\begin{figure}[h!]                      
\begin{center}                         
\includegraphics[width=12cm]{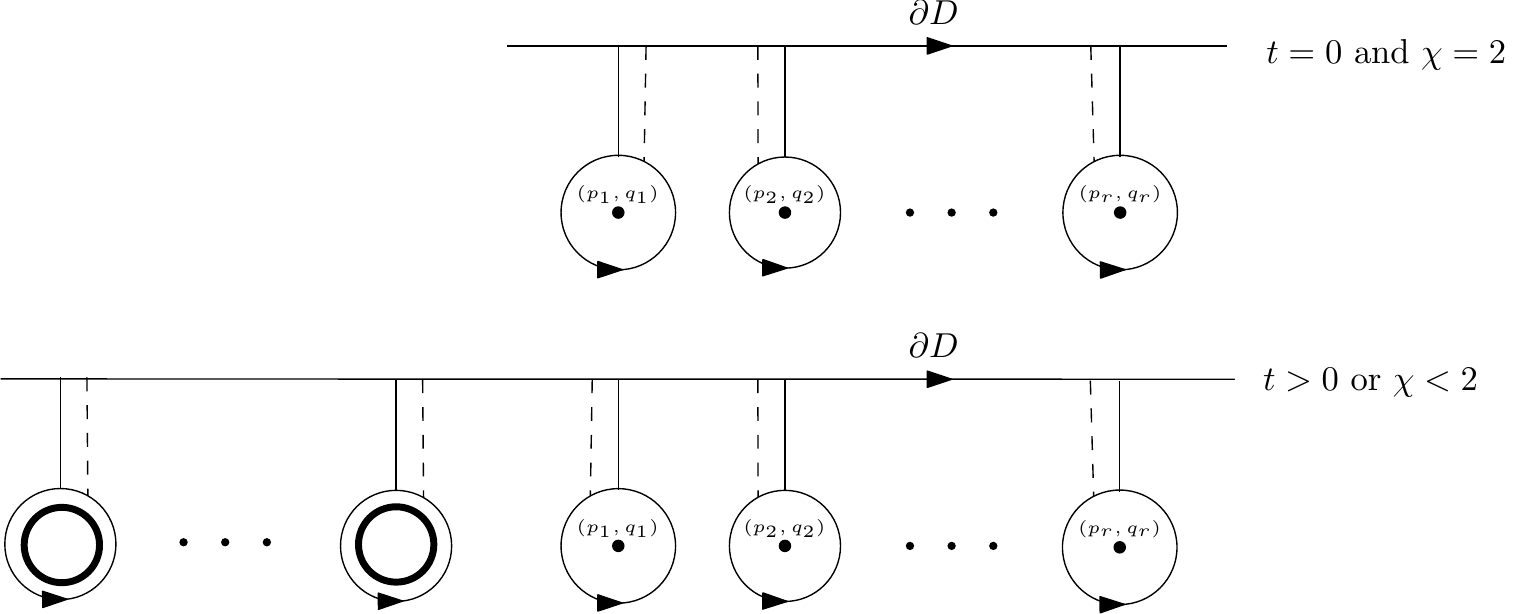}
\caption[legenda elenco figure]{The choice of the shifts for the  components of $A$ intersecting $\partial D$.}\label{B}
\end{center}
\end{figure}

By assembling  $P_{E_i}$  and the skeleton of $(T_j,\emptyset)$ with $P_0$ by the identity for $i=1,\ldots,t$ and $j=1,\ldots,r$,    we obtain the desired spine $S$ for $M$. When either $t>0$ or $\chi<2$,
the number of true vertices of $S$ is  $6(1-\chi)+3(t+r)+3t+\sum_{j=1}^r \left(S(p_j,q_j)-2\right)$, which proves (\ref{petronio}) and (\ref{sharm}). When $\chi=2$ and $t=0$, the number of true vertices of $S$ is $-6+3r+\sum_{j=1}^r \left(S(p_j,q_j)-2\right)+1$, and~(\ref{petronio}) is proved. \\

Let now $b\neq 0$ (and therefore $t=0$). We prove (\ref{petronio}) and (\ref{sharm}) in different steps: $b=1$, $b=-1$ and $\vert b\vert >1$. Let $M'$ be the  Seifert fibre space with the same parameters of $M$ but with $b=0$, and let $S'$ be the spine of $M'$ constructed as before.

\begin{figure}[h!]                      
\begin{center}                         
\includegraphics[width=9cm]{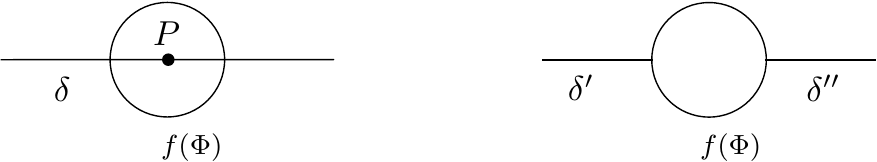}
\caption[legenda elenco figure]{}\label{quattro}
\end{center}
\end{figure}

First suppose $b=1$. In this case $M$ can be obtained from $M'$  by adding a non-trivial (non-exceptional) fibre of type $(1,1)$. Namely,  by removing from $M'$ a trivially fibred solid torus $\Phi$ which is a fiber-neighborhood of a regular fibre $\phi$, and by attaching back a solid torus $ST=\Delta\times S^1$  via a homeomorphism $\psi:\partial(ST) \to \partial \Phi$  such that $\psi(\partial \Delta\times \{1\})$ is  a curve of type $(1,1)$ on $\partial \Phi$. It is convenient to take the fibre $\phi$ corresponding to an internal point $P$ of $A$ and suppose that $f(\Phi)$ is a "small" disk intersecting the component $\delta$ of $A$ containing $P$ in an interval and  being disjoint from $\partial B_0\cup\partial D$ and from the other components of $A$. In this way $\delta\setminus \textrm{int}(f(\Phi))$ is the disjoint union of two arcs $\delta'$ and $\delta''$, where we can perform the shifts independently (see Figure~\ref{quattro}). A spine $S$ of $M$ is obtained as follows. 

First of all, remove from the spine $S'$ of $M'$ the internal part of $\Phi$ and possibly change the twist corresponding to either $\delta'$ or $\delta''$ without increasing the number of true vertices of the main block. 
Let $S''$ be the polyhedron obtained in this way and set $S'''=S''\cup \partial\Phi\cup_{\psi}\left(\Delta\times \{1\}\right)$. Then $M\setminus S'''$ is the union of two open 3-balls, since $\Phi\setminus \left(\partial\Phi\cup_{\psi}(\Delta\times \{1\})\right)$ is an open 3-ball. 
Therefore, in order to obtain the spine $S$ of $M$ we have to remove from $S'''$ a suitable open 2-cell on $\partial\Phi$. The space $\Gamma'=(\partial\Phi\cap S'')\cup\psi(\partial\Delta\times \{1\}))$ is a graph cellularly embedded in $\partial\Phi$ (see Figure~\ref{C}, where the  label $1$ inside the disc stands for the fibre type $(1,1)$), so we delete the region $R$ of $\partial\Phi\setminus \Gamma'$ having in the boundary the highest number of vertices of $\Gamma'$. If we take for $\delta'$ and $\delta''$ the shifts induced by the one of $\delta$, then we can choose $R$ containing in its boundary all the  true vertices of $S$ belonging to $\partial \Phi$ with the exception of one (see the first two pictures of Figure~\ref{C}) and $S$ has one true vertex more than $S'$. On the contrary, if one of the two shifts is changed as in the third draw of Figure~\ref{C}, then $R$ can be chosen containing in its boundary all true vertices and therefore $S$ and $S'$ have the same number of true vertices. 

So, if either $\chi=2$ (and therefore $r\ge 2$) or $\chi=1$ and $\epsilon=n_2$, we take as $\delta$ any arc of $A$ and use for $\delta'$ and $\delta''$ the shifts induced by the one of $\delta$. Then (\ref{petronio}) is proved. 

\begin{figure}[h!]                      
\begin{center}                         
\includegraphics[width=12cm]{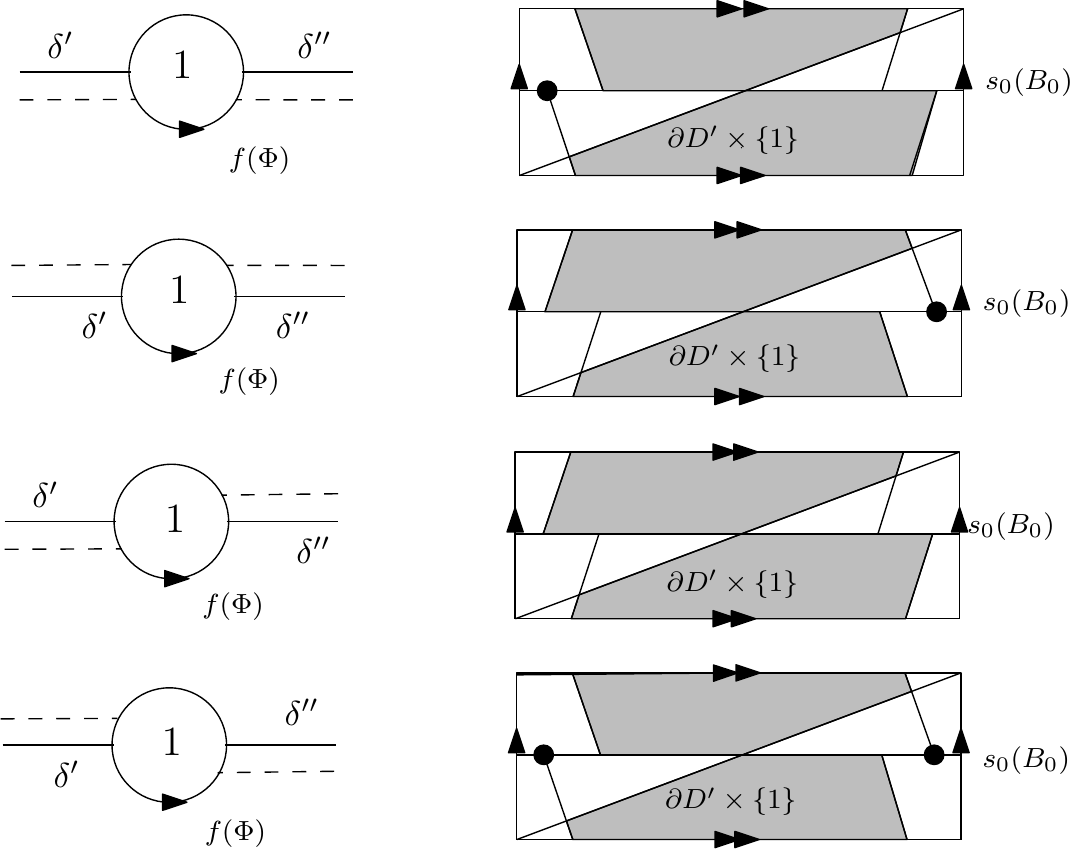}
\caption[legenda elenco figure]{The graph $\Gamma'$ embedded in the torus $\partial \Phi$ with different choices of the shifts for $\delta'$ and $\delta''$.}\label{C}
\end{center}
\end{figure}

If $\chi<2$ and $\epsilon=n_1$ when $\chi=1$, it is always  possible to choose an arc $\delta$ of $A$ not intersecting $\partial B_0$ and to choose the shifts for $\delta'$ and $\delta''$ as depicted in  the third draw of Figure~\ref{C} without  increasing the number of true vertices of the main block. Then (\ref{petronio}) and (\ref{sharm}) are proved.

\begin{figure}[h!]                      
\begin{center}                         
\includegraphics[width=12cm]{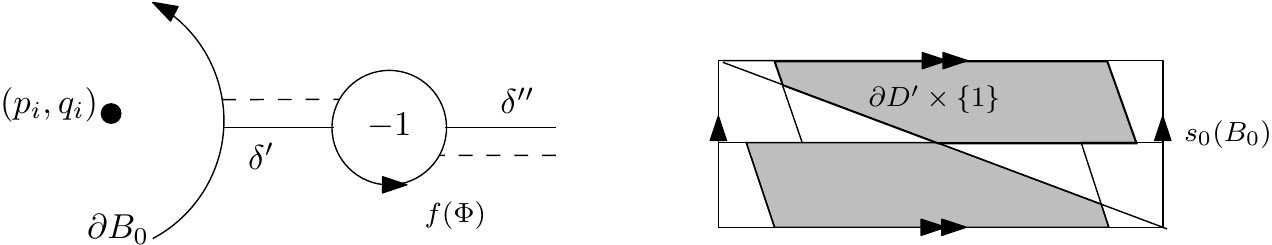}
\caption[legenda elenco figure]{}\label{tre}
\end{center}
\end{figure}

In this way~(\ref{sharm}) is proved for all cases.\\

Let now $b=-1$ (and therefore $r\ge 2$). The procedure to obtain $M$ from $M'$ and to construct the spine  $S$  is the same as in  case $b=1$, but this time  adding a non-trivial fibre of type $(1,-1)$. 

If $\chi=2$ take $\delta$ as the arc with non-regular shift. Then 
the shift of $\delta'$ and $\delta''$ can be chosen as in Figure~\ref{tre} (no true vertices out of the boundary of the gray region). Since the shift of the new arc which intersect $\partial B_0$ (say $\delta'$) becomes regular, the spine $S$ has one true vertex less than $S'$ (namely it has $-6+3r+\sum_{j=1}^r \left(S(p_j,q_j)-2\right)$ true vertices) and~(\ref{petronio}) is proved.

\begin{figure}[h!]                      
\begin{center}                         
\includegraphics[width=9cm]{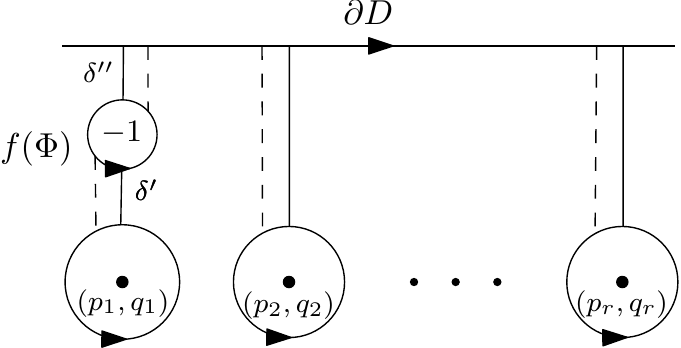}
\caption[legenda elenco figure]{}\label{E}
\end{center}
\end{figure}

If $\chi<2$ take as  $\delta$ an arc intersecting $\partial B_0$ and having a regular shift. Then the shifts of $\delta'$ and $\delta''$ can be chosen as in Figure \ref{E}: the main block does not increases the number of true vertices and the spine has the same number of vertices of the one of $M'$, so  proving~(\ref{petronio}).

Finally, let $\vert b\vert>1$. In this case $M$ can be obtained from $M'$ by replacing $\vert b\vert$ trivial fibres with $\vert b\vert$ fibres of type $(1,\textrm{sign}(b))$. Again it is convenient to choose the fibres corresponding to internal points of $A$ and  to remove disjoint fibre-neighborhoods of the chosen fibres with the same properties as before.

If $b<-1$ take all points in different arcs $\delta_i$ which intersect $\partial B_0$ (it is possible since $r\ge -2b=2\vert b\vert>\vert b\vert$) and the shifts of the new arcs as depicted in Figure \ref{G3} (so $\partial \Phi_i$ is as in  Figure \ref{tre}). Moreover, if $\chi=2$ the first point has to be taken in the arc with non-regular shift. In this way the shifts of all new arcs still intersecting $\partial B_0$ (say $\delta''_i$) are regular, and the number of true vertices of the main block does not increase. 

Therefore the spine $S$ has the same number of true vertices of the case $b=-1$, which proves~(\ref{petronio}).

\begin{figure}[h!]                      
\begin{center}                         
\includegraphics[width=8cm]{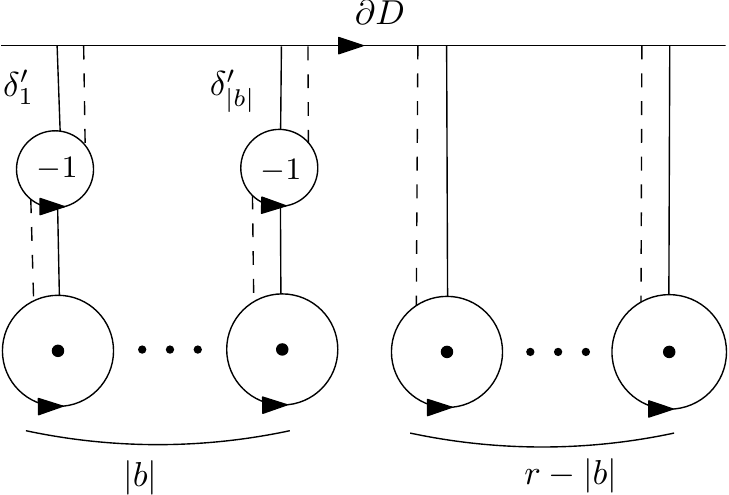}
\caption[legenda elenco figure]{}\label{G3}
\end{center}
\end{figure}

If $b>1$ then it is possible to take $1-\chi$ points in different arcs $\delta_i$ not intersecting $\partial B_0$ and to choose the shifts of $\delta'_i$ and $\delta''_i$ in such a way that (i) $\partial \Phi_i$ is as in the third draw of Figure \ref{C} and (ii) the number of true vertices of the main block does not increase (see the upper picture of   Figure \ref{F}). The remaining $b-1+\chi$ points are chosen outside $f(\Phi_i)$ for all $i$, with the shifts of the new edges  induced by those of the old ones as depicted in the bottom part of Figure \ref{F}. In this way~(\ref{petronio}) is proved.

\begin{figure}[h!]                      
\begin{center}                         
\includegraphics[width=12.5cm]{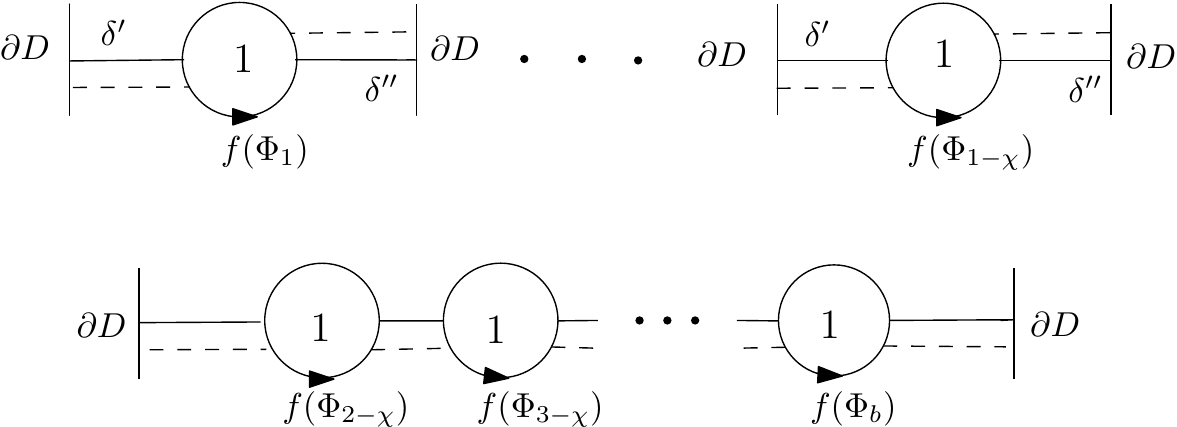}
\caption{}
\label{F}
\end{center}
\end{figure}

\end{proof}

\begin{oss}
\label{fine}
In the non-orientable (closed) case, exact values of complexity are listed in \cite{AM} (up to complexity 7), in \cite{B}  (up to complexity 10) and at the web page \texttt{https://regina-normal.github.io}  (up to complexity 11).  For all the Seifert fibre spaces included in those lists, that are about 350,   the complexity estimation given by   (\ref{sharm}) is  sharp, except in the  cases: (i) $\chi=1$, $\epsilon=n_1$, $t=0$, $r=1$ and (ii) $\chi=2$, $t=r=1$. Note that these cases  concern different Seifert fibre structures of $\mathbb{RP}^2\times S^1$ and  $S^2\widetilde{\times}S^1$, respectively, whose correct estimation is given in 4. and 5. of Theorem~\ref{closed}.

It is worth noting that Burton in \cite{B} and \texttt{https://regina-normal.github.io} uses in some cases non-normalized parameters for Seifert fibre spaces:  the space  $\{1; (\epsilon, g, (0,0)); (\ \mid\ ); ((p_1,q_1),\dots,(p_{r-1},q_{r-1}), (p_r,q_r)) \}$, with  $\epsilon\in\{o_2,n_1,n_3,n_4\}$, appears  there as  $\{0; (\epsilon, g, (0,0)); (\ \mid\ ); ((p_1,q_1),\dots,(p_{r-1},q_{r-1}), (p_r,p_r-q_r))\}$.

In  the orientable (closed) case,  exact values of complexity  are listed in \cite{M2} (up to complexity 6 and partially up to complexity 7),  in \cite{MP2} (up to complexity 6 and partially up to complexity 9) and in the web page  \texttt{http://matlas.math.csu.ru/?page=search} (up to complexity 12). For all the Seifert fibre spaces included in those lists the complexity estimation given by (\ref{petronio})  is sharp except for the following cases:

\begin{itemize}
   \item[(i)] manifolds of the form  $M=\left\{-1;\left(o_1,0,(0,0)\right);\left(\ \mid \ \right); \left((2,1),(n,1),(m,1)\right)\right\}$ with $2\leq n\leq m$, where the estimation of~(\ref{petronio}) exceeds the exact value by  one or two.\footnote{These manifolds are the ones of the family $\mathcal M^*$ studied in \cite{MP2}, where an estimation of the complexity sharper than (\ref{petronio}) is given.} In particular, if $n=2$ then $M$ also admits the Seifert fibre structure $M=\left\{m;\left(n_2,1,(0,0)\right);\left(\ \mid \ \right); \right\}$ (see \cite{O}), and in this case~(\ref{petronio}) gives the sharper value of complexity $c(M)\le m$;
   
\item[(ii)] manifolds of the form  $M=\left\{-1;\left(o_1,0,(0,0)\right);\left(\ \mid \ \right); \left((2,1),(3,1),(p,q)\right)\right\}$, with $p/q>5$ and $p/q\notin \mathbb Z$, where the estimation of~(\ref{petronio}) exceeds the exact value by  one.
   
\end{itemize}

\end{oss}
The sharpness of formula \eqref{sharm} in all  known cases justifies the following conjecture.

\begin{con} Let $M=\left\{b;\left(\epsilon,g,(t,k)\right);\left(\ \mid \ \right);
\left((p_1,q_1),\ldots,(p_r,q_r)\right)\right\}$  be a non-orientable closed irreducible and $\mathbb P^2$-irreducible Seifert fibre space, then 
$$
c(M)= 6(1-\chi)+6t+\sum_{j=1}^r \left(S(p_j,q_j)+1\right).
$$

\end{con}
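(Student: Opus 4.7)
The upper bound half of the conjectured equality is Theorem~\ref{closedi}, so the plan reduces entirely to proving the matching lower bound
\[
c(M)\ \ge\ 6(1-\chi)+6t+\sum_{j=1}^r \bigl(S(p_j,q_j)+1\bigr).
\]
My strategy would combine the behaviour of Matveev complexity under cutting along incompressible surfaces with lower bounds coming from quantum invariants.

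First, I would cut $M$ along the canonical incompressible tori and Klein bottles bounding fibred neighbourhoods of the exceptional set: this splits $M$ into $t$ exceptional blocks ($N\times S^1$ or $N\widetilde{\times}S^1$), $r$ fibred solid tori $T(p_j,r_j)$, and a main block that is an $S^1$-bundle over the base orbifold with open neighbourhoods of its cone points and reflector circles removed. Under the irreducibility and $\mathbb P^2$-irreducibility hypothesis, Matveev complexity is additive along incompressible tori, and the analogous additivity is expected along incompressible Klein bottles in the non-orientable setting. If such an additivity holds in the relative form developed in \cite{MP}, the problem localises to proving sharp lower bounds for each piece: $6(1-\chi)$ for the relative complexity of the main block (whose boundary must carry non-trivial theta graphs), $6$ for each exceptional block so glued, and $S(p_j,q_j)+1$ for each $T(p_j,r_j)$ with theta-graph boundary. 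These numbers are precisely the true-vertex counts appearing in the spine constructed in Theorem~\ref{closedi}, so the goal is exactly to show that this spine is optimal piece by piece.

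The principal tool for the individual lower bounds would be the Turaev--Viro invariants $TV_r$, which satisfy $c(M)\ge \alpha_r\log|TV_r(M)|+\beta_r$ for explicit constants $\alpha_r,\beta_r$. The Seifert structure permits closed-form state-sum expressions for $TV_r(M)$ in terms of $(\epsilon,g,t,k,(p_j,q_j))$, as a sum over colourings of the base orbifold weighted by quantum $6j$-symbols at the cone points and reflector circles; one would then optimise $r$ in the parameters to reproduce asymptotically the terms $S(p_j,q_j)+1$, $6t$ and $6(1-\chi)$ of the conjectured formula.

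The hard part, and the reason this is stated only as a conjecture, is that sharp lower bounds on Matveev complexity are notoriously elusive: even for lens spaces, sharpness of $c(L(p,q))\le\max\{S(p,q)-3,0\}$ is established only through the delicate $0$-efficient triangulation arguments of Jaco--Rubinstein--Tillmann, and these do not transfer in any obvious way either to non-orientable targets or to manifolds containing the $SE$-surfaces responsible for the $6t$ term. A realistic attack would first adapt the JRT machinery to the orientable-base, $t=0$ subfamily, and then handle the exceptional-surface contribution by a separate, gluing-sensitive argument, since the closed pieces $N\times S^1$ and $N\widetilde{\times}S^1$ themselves have complexity zero and the claimed contribution of $6$ per such piece can arise only from the way a minimal spine of $M$ is forced to interact with the embedded exceptional torus or Klein bottle. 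Settling this gluing-theoretic lower bound is, in my view, the genuine obstacle.
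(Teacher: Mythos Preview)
The statement you are addressing is a \emph{conjecture} in the paper, not a theorem: the paper offers no proof and makes no attempt at one. Its sole justification is empirical---formula~(\ref{sharm}) matches the exact complexity for all (roughly 350) non-orientable closed Seifert fibre spaces in the existing censuses (Remark~\ref{fine}). So there is no ``paper's own proof'' to compare against; your proposal is a research programme for an open problem, and you clearly recognise this.

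That said, your outline has a structural weakness you only half-confront. You propose to cut $M$ along the incompressible tori and Klein bottles bounding neighbourhoods of the exceptional set and then invoke additivity of complexity. But Matveev complexity is not additive along incompressible tori in general---cutting does not increase complexity, which gives an inequality in the wrong direction for a lower bound, and the relative (brick) theory of \cite{MP} does not supply the missing equality. More concretely, every one of your pieces has absolute complexity zero: the fibred solid tori are solid tori, and you yourself note that $N\times S^1$ and $N\widetilde{\times}S^1$ have complexity zero. So the entire right-hand side $6(1-\chi)+6t+\sum(S(p_j,q_j)+1)$, not just the $6t$ term, must come from gluing constraints on how a minimal spine meets the cutting surfaces. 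You acknowledge this for the exceptional blocks but not for the solid tori, where the contribution $S(p_j,q_j)+1$ is equally invisible to naive additivity.

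The Turaev--Viro idea is reasonable in spirit, but the inequality $c(M)\ge\alpha_r\log|TV_r(M)|+\beta_r$ you invoke is not an established theorem in a form sharp enough to recover the conjectured value; known $TV$-based lower bounds are typically far from tight. The honest summary is that your proposal correctly isolates the difficulty---sharp lower bounds for complexity are essentially unavailable outside the lens-space work of \cite{JRT,JRT2}---but does not supply a mechanism that would overcome it.
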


\end{subsection}

\end{section}

\footnotesize

\bigskip

\begin{flushleft}

{\bf AMS Subject Classification: Primary  57M27; Secondary 57N10, 57R22.}\\[2ex]

\vbox{

Alessia~CATTABRIGA\\
Department of Mathematics, University of Bologna\\
Piazza di Porta San Donato 5, 40126 Bologna, ITALY\\
e-mail: \texttt{alessia.cattabriga@unibo.it}\\}

\medskip

Sergei~MATVEEV\\
Chelyabinsk State University and Krasovskii Institute of Mathematics and Mechanics\\
129 Bratiev Kashirinykh st., 454001 Chelyabinsk, RUSSIA\\
e-mail: \texttt{svmatveev@gmail.com}\\

\medskip

Michele~MULAZZANI\\
Department of Mathematics and ARCES, University of Bologna\\
Piazza di Porta San Donato 5, 40126 Bologna, ITALY\\
e-mail: \texttt{michele.mulazzani@unibo.it}\\

\medskip

Timur~NASYBULLOV\\
Department of Mathematics, Katholieke Universiteit Leuven Kulak\\
Etienne Sabbelaan 53, 8500 Kortrijk, BELGIUM\\
e-mail: \texttt{timur.nasybullov@mail.ru}\\[2ex]

\end{flushleft}

\end{document}